\documentclass[12pt,reqno,a4paper]{amsart}

\usepackage[doi=false,isbn=false,style=alphabetic,sorting=nyt,backend=biber,maxnames=5,maxalphanames=5,giveninits=true]{biblatex}
\bibliography{tropicalVarietyProjection}

\usepackage[breaklinks,colorlinks,plainpages,hypertexnames=false,plainpages=false]{hyperref}
\hypersetup{urlcolor=blue, citecolor=blue, linkcolor=blue}

\usepackage[utf8]{inputenc}
\usepackage{amsmath}
\usepackage{amsthm}
\usepackage{amssymb}
\usepackage{amsfonts}
\usepackage{enumerate}
\usepackage{stmaryrd}
\usepackage{enumitem}
\usepackage{centernot}
\usepackage{todonotes}
\usepackage{mathrsfs}
\usepackage{listings}
\usepackage{multicol}
\usepackage[capitalise, noabbrev]{cleveref} %
\usepackage[noend]{algorithmic} %

\usepackage{mathdots}

\usepackage{array}
\newcolumntype{L}[1]{>{\raggedright\let\newline\\\arraybackslash\hspace{0pt}}m{#1}}
\newcolumntype{C}[1]{>{\centering\let\newline\\\arraybackslash\hspace{0pt}}m{#1}}
\newcolumntype{R}[1]{>{\raggedleft\let\newline\\\arraybackslash\hspace{0pt}}m{#1}}

\setlength{\oddsidemargin}{-1in} %
\addtolength{\oddsidemargin}{30mm}
\setlength{\evensidemargin}{\oddsidemargin}
\setlength{\textwidth}{150mm} %

\usepackage{tikz}
\usetikzlibrary{arrows}
\usetikzlibrary{decorations.pathreplacing}
\usetikzlibrary{matrix}
\usetikzlibrary{calc}
\usetikzlibrary{shapes}
\usetikzlibrary{patterns}
\usetikzlibrary{fit,backgrounds,scopes}
\newtheoremstyle{theoremstyle}
{10pt}      %
{5pt}       %
{\itshape}  %
{}          %
{\bfseries} %
{}         %
{ }      %
{}          %

\newtheoremstyle{algorithmstyle}
{10pt}      %
{5pt}       %
{}  %
{}          %
{\bfseries} %
{}         %
{ }      %
{}          %

\newtheoremstyle{examplestyle}
{10pt}      %
{5pt}       %
{}          %
{}          %
{\bfseries} %
{}         %
{ }      %
{}          %

\theoremstyle{theoremstyle}
\newtheorem{theorem}{Theorem}[section]
\newtheorem{lemma}[theorem]{Lemma}
\newtheorem{proposition}[theorem]{Proposition}

\theoremstyle{examplestyle}
\newtheorem{example}[theorem]{Example}
\newtheorem{definition}[theorem]{Definition}

\newtheorem{remark}[theorem]{Remark}
\newtheorem{convention}[theorem]{Convention}

\theoremstyle{algorithmstyle}
\newtheorem{algorithm}[theorem]{Algorithm}

\newcommand{\NN}{\mathbb{N}}

\newcommand{\RR}{\mathbb{R}}
\newcommand{\QQ}{\mathbb{Q}}
\newcommand{\ZZ}{\mathbb{Z}}
\newcommand{\FF}{\mathbb{F}}
\newcommand{\PP}{\mathbb{P}}

\newcommand{\suchthat}{\;\ifnum\currentgrouptype=16 \middle\fi|\;}
\newcommand{\bigmid}{\left.\vphantom{\Big\{} \suchthat \vphantom{\Big\}}\right.}

\DeclareMathOperator{\Trop}{Trop}

\DeclareMathOperator{\Res}{Res}

\newcommand\restr[2]{{\left.\kern-\nulldelimiterspace #1 \right|_{#2}}}

\newcommand{\overbar}[1]{\mkern 1.5mu\overline{\mkern-1.5mu#1\mkern-1.5mu}\mkern 1.5mu}

\setcounter{biburllcpenalty}{7000} %
\setcounter{biburlucpenalty}{8000}

\begin{document}

\title[Computing zero-dimensional tropical varieties]{Computing zero-dimensional tropical varieties\\ via projections}
\author{Paul G\"orlach}
\address{Max Planck Institute for Mathematics in the Sciences\\
  Inselstra{\ss}e 22\\
  04103 Leipzig\\
  Germany
}
\email{paul.goerlach@mis.mpg.de}
\urladdr{https://personal-homepages.mis.mpg.de/goerlach}
\author{Yue Ren}
\address{Max Planck Institute for Mathematics in the Sciences\\
  Inselstra{\ss}e 22\\
  04103 Leipzig\\
  Germany
}
\email{yue.ren@mis.mpg.de}
\urladdr{https://yueren.de}

\author{Leon Zhang}
\address{University of California, Berkeley\\
  970 Evans Hall \#3840\\
  Berkeley, CA 94720-3840, USA
  USA
}
\email{leonyz@berkeley.edu}
\urladdr{https://math.berkeley.edu/~leonyz}

\subjclass[2010]{14T05, 13P10, 13P15, 68W30}

\date{\today}

\keywords{tropical geometry, tropical varieties, computer algebra.}

\begin{abstract}
  We present an algorithm for computing zero-dimensional tropical varieties using projections. Our main tools are fast unimodular transforms of lexicographical Gr\"obner bases. We prove that our algorithm requires only a polynomial number of arithmetic operations if given a Gr\"obner basis, and we demonstrate that our implementation compares favourably to other existing implementations. \newline Applying it to the computation of general positive-dimensional tropical varieties, we argue that the complexity for calculating tropical links is dominated by the complexity of the Gr\"obner walk.
\end{abstract}

\maketitle

\section{Introduction}

Tropical varieties are piecewise linear structures which arise from polynomial equations. They appear naturally in many areas of mathematics and beyond, such as geometry \cite{Mikhalkin05}, combinatorics \cite{AK06,FS05}, and optimisation \cite{ABGJ18}, as well as phylogenetics \cite{SS04,LMY18}, celestial mechanics \cite{HM06,HJ11}, and auction theory \cite{BK19,TY15}. Wherever they emerge, tropical varieties often provide a fresh insight into existing computational problems, which is why efficient algorithms and optimised implementations are of great importance.

Computing tropical varieties from polynomial ideals is a fundamentally important yet algorithmically challenging task, requiring sophisticated techniques from computational algebra and convex geometry. Currently, \textsc{Gfan} \cite{gfan} and \textsc{Singular} \cite{singular} are the only two programs capable of computing general tropical varieties. Both programs rely on a traversal of the Gr\"obner complex as initially suggested by Bogart, Jensen, Speyer, Sturmfels, and Thomas \cite{BJSST07}, and for both programs the initial bottleneck had been the computation of so-called tropical links. Experiments suggest that this bottleneck was resolved with the recent development of new algorithms \cite{Chan13,HR18}. However the new approaches still rely on computations that are known to be very hard, \cite{Chan13} on elimination and \cite{HR18} on root approximation to an unknown precision.

In this paper, we study the computation of zero-dimensional tropical varieties, which is the key computational ingredient in \cite{HR18}, but using projections, which is the key conceptual idea in \cite{Chan13}. We create a new algorithm for computing zero-dimensional tropical varieties that only requires a polynomial amount of field operations if we start with a Gr\"obner basis, and whose timings compare favourably with other implementations even if we do not. In particular, we argue that in the computation of general tropical varieties, the calculation of so-called tropical links becomes computationally insignificant compared to the Gr\"obner walk required to traverse the tropical variety.

\smallskip

Note that projections are a well-studied approach in polynomial systems solving, see \cite{Sturmfels02,DE05} for an overview on various techniques. Our approach can be regarded as a non-Archimedean analogue of that strategy, since tropical varieties can be regarded as zeroth-order approximation of the solutions in the topology induced by the valuation.

\smallskip

Our paper is organised as follows:
In Section~\ref{sec:unimodular}, we introduce a special class of unimodular transformations and study how they act on generic lexicographical Gr\"obner bases. In Section~\ref{sec:algorithm}, we explain our main algorithm for reconstructing zero-dimensional tropical varieties from their projections,
while Section~\ref{sec:implementation} touches upon some technical details of the implementation. In Section~\ref{sec:timings}, we compare the performance of our algorithm against the root approximation approach, while Section~\ref{sec:complexity} analyses the complexity of our algorithm.

Implementations of all our algorithms can be found in the \textsc{Singular} library
\texttt{tropicalProjection.lib}. Together with the data for the timings, it is available at \href{https://software.mis.mpg.de}{https://software.mis.mpg.de}, and will also be made publicly available as part of the official \textsc{Singular} distribution.

\subsection*{Acknowledgment} The authors would like to thank Avi Kulkarni (MPI Leipzig) for his \textsc{Magma} script for solving polynomial equations over $p$-adic numbers, Marta Panizzut (TU Berlin) and Bernd Sturmfels (MPI Leipzig + UC Berkeley) for the examples of tropical cubic surfaces with $27$ distinct lines, as well as Andreas Steenpa{\ss} (TU Kaiserslautern) for his work on the \textsc{Singular} library \texttt{modular.lib} \cite{modularlib}.

\section{Background}

For the sake of notation, we briefly recall some basic notions of tropical algebraic geometry and computational algebra that are of immediate relevance to us. In tropical geometry, our notation closely follows that of \cite{MS15}.

\begin{convention}
  For the remainder of the article, let $K$ be a field with non-trivial valuation $\nu\colon K^* \rightarrow\RR$ and fix a multivariate polynomial ring $K[\mathbf x] := K[x_1, \ldots, x_n]$ as well as a multivariate Laurent polynomial ring $K[\mathbf x^{\pm}] := K[x_1^{\pm}, \ldots, x_n^{\pm}]$.

  Moreover, given a Laurent polynomial ideal $I\subseteq K[\mathbf x^{\pm}]$, we call a finite subset $G\subseteq I$ a \emph{Gr\"obner basis} with respect to a monomial ordering $\prec$ on $K[\mathbf{x}]$ if $G$ consists of polynomials and forms a Gr\"obner basis of the polynomial ideal $I\cap K[\mathbf x]$ with respect to $\prec$ in the conventional sense, see for example \cite[\S 1.6]{GreuelPfister02}. All our Gr\"obner bases are reduced.

  Finally, a \emph{lexicographical Gr\"obner basis} will be a Gr\"obner basis with respect to the lexicographical ordering $\prec_{\rm lex}$ with $x_n \prec_{\rm lex} \dots \prec_{\rm lex} x_1$.
\end{convention}

For the purposes of this article, the following definition of tropical varieties in terms of coordinate-wise valuations of points in solution sets suffices.

\begin{definition}[Tropical variety]
  Let $I \subseteq K[\mathbf x^{\pm}]$ be a Laurent polynomial ideal. The \emph{tropical variety} $\Trop(I) \subseteq \RR^n$ is given by
  \[ \Trop(I):=\text{cl}\Big(\big\{(\widehat{\nu}^\text{al}(p_1),\ldots,\widehat{\nu}^\text{al}(p_n)) \in \RR^n \mid (p_1,\ldots,p_n) \in V_{{\widehat K}^{\text{al}}}(I)\big\}\Big),\]
  where ${\widehat K}^{\text{al}}$ denotes the algebraic closure of the completion of $K$, so that $\nu$ extends uniquely to a valuation $\widehat{\nu}^\text{al}$ on ${\widehat K}^{\text{al}}$, $V_{{\widehat K}^{\text{al}}}(I)$ denotes the affine variety of $I$ over ${\widehat K}^{\text{al}}$, and $\text{cl}(\cdot)$ is the closure in the euclidean topology.
\end{definition}

In this article, our focus lies on zero-dimensional ideals $I \subseteq K[\mathbf x^{\pm}]$, in which case $\Trop(I)$ is a finite set of $\deg(I)$ points if each point $w\in\Trop(I)$ is counted with the multiplicity corresponding to the number of solutions $p\in V_{{\widehat K}^{\text{al}}}(I)$ with ${\widehat \nu}^{\text{al}}(p)=w$. %

In the univariate case, the tropical variety of an ideal $I = (f) \subseteq K[x^{\pm}_1]$ simply consists of the negated slopes in the Newton polygon of $f$ \cite[Proposition II.6.3]{Neukirch}. Our approach for computing zero-dimensional tropical varieties of multivariate ideals is based on reducing computations to the univariate case.

\begin{definition}
 We say that a zero-dimensional ideal $I \subseteq K[\mathbf{x}^{\pm}]$ is in \emph{shape position} if the projection onto the last coordinate $p_n \colon (K^\ast)^n \to K^\ast$, $(a_1,\ldots,a_n) \mapsto a_n$ defines a closed embedding $\restr{p_n}{V(I)} \colon V(I) \hookrightarrow K^\ast$.
\end{definition}

In this article, we will concentrate on ideals that are in shape position. Lemma~\ref{lem:shapeFormGroebnerBasis} shows an easy criterion to decide whether a given ideal is in shape position, while Lemma~\ref{lem:shapeFormTransformation} shows how to coax degenerate ideals into shape position.

\begin{lemma}[{\cite[\S 4 Exercise 16]{CLO05}}] \label{lem:shapeFormGroebnerBasis}
  A zero-dimensional ideal $I \subseteq K[\mathbf{x}^{\pm}]$ is in shape position if and only if its (reduced) lexicographical Gröbner basis is of the form
  \begin{equation}\tag{SP}\label{eq:shapePosition}
    G = \{f_n,\, x_{n-1}-f_{n-1},\, \ldots,\, x_2-f_2,\, x_1-f_1\}
  \end{equation}
  for some univariate polynomials $f_1, \ldots, f_n \in K[x_n]$. The polynomials $f_i$ are unique.
\end{lemma}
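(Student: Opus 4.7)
The plan is to translate shape position into the purely algebraic condition that the $K$-algebra homomorphism
$$ \varphi \colon K[x_n^{\pm}] \to R := K[\mathbf{x}^{\pm}]/I, \qquad x_n \mapsto x_n,$$
is surjective. Indeed, $p_n|_{V(I)}$ is scheme-theoretically the morphism $\Spec R \to \Spec K[x_n^{\pm}]$ dual to $\varphi$, and a morphism of affine schemes is a closed embedding precisely when the corresponding ring map is surjective. With this reformulation, $(\Leftarrow)$ is immediate: if the reduced lex Gröbner basis of $I \cap K[\mathbf{x}]$ has the form \eqref{eq:shapePosition}, then $x_i \equiv f_i(x_n) \pmod{I}$ for every $i < n$, so $\varphi$ is surjective.

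For $(\Rightarrow)$, surjectivity of $\varphi$ provides, for each $i < n$, a polynomial $f_i \in K[x_n]$ with $x_i - f_i(x_n) \in I$. The kernel of $\varphi$ is a nontrivial principal ideal of $K[x_n^{\pm}]$ (nontrivial since $R$ is finite-dimensional over $K$, principal since $K[x_n^{\pm}]$ is a PID), and clearing $x_n$-powers yields a generator $f_n \in K[x_n]$ with $f_n(0) \neq 0$. After reducing each $f_i$ modulo $f_n$ we may assume $\deg f_i < d := \deg f_n$, and set
$$G' := \{f_n,\, x_{n-1}-f_{n-1},\, \ldots,\, x_1-f_1\}.$$
I then show that $G'$ is the reduced lex Gröbner basis of $J := I \cap K[\mathbf{x}]$: clearly $G' \subseteq J$ with leading monomials $x_1, \ldots, x_{n-1}, x_n^d$, so $\{1, x_n, \ldots, x_n^{d-1}\}$ spans $K[\mathbf{x}]/(G')$. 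The composite
$$K[\mathbf{x}]/(G') \twoheadrightarrow K[\mathbf{x}]/J \hookrightarrow R$$
sends these $d$ monomials to a $K$-basis of $R \cong K[x_n]/(f_n)$, forcing all three terms to be $d$-dimensional and the composite to be an isomorphism. Hence $(G') = J$ and $G'$ is the reduced lex Gröbner basis; uniqueness of the $f_i$ then follows from uniqueness of reduced Gröbner bases.

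The main technical ingredient is the injectivity of the localization map $K[\mathbf{x}]/J \hookrightarrow R$, which crucially uses the Laurent setting: since $J$ is the contraction of a Laurent polynomial ideal, it is automatically saturated with respect to $x_1 \cdots x_n$, so each $x_i$ is a non-zero-divisor modulo $J$, and the dimension count underlying the Gröbner basis argument goes through.
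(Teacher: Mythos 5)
The paper does not prove this lemma --- it is cited as an exercise from Cox--Little--O'Shea --- so there is no paper proof to compare against. Your argument is correct and self-contained; I'll just flag three small expository points.

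First, in $(\Leftarrow)$ the observation that $x_i \equiv f_i(x_n) \pmod I$ shows the image of $\varphi$ contains $\overline{x_1},\ldots,\overline{x_n}$, but you should also note why it contains $\overline{x_i}^{-1}$: since $R$ is a finite-dimensional $K$-algebra and $\overline{x_i}$ is a unit, multiplication by $\overline{x_i}$ is a bijection on the subalgebra $K[\overline{x_i}]$, so $\overline{x_i}^{-1} \in K[\overline{x_i}] \subseteq \operatorname{im}\varphi$. Second, in $(\Rightarrow)$ you claim surjectivity immediately furnishes $f_i\in K[x_n]$ (polynomial, not Laurent) with $x_i - f_i \in I$; strictly, surjectivity gives $g_i \in K[x_n^{\pm}]$, and passing to a polynomial representative uses the fact that $x_n^{-1}$ is congruent to a polynomial modulo $f_n$ (equivalently $K[x_n^{\pm}]/(f_n) = K[x_n]/(f_n)$ since $f_n(0) \neq 0$), so this step should come after $f_n$ has been introduced --- which indeed you do a sentence later via the reduction modulo $f_n$, so it is just a matter of ordering. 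Third, the closing remark about saturation is true but unnecessary for the injectivity you need: the map $K[\mathbf{x}]/J \hookrightarrow R$ is injective by the very definition $J = I \cap K[\mathbf{x}]$, since the kernel of $K[\mathbf{x}] \to K[\mathbf{x}^{\pm}]/I$ is exactly $I \cap K[\mathbf{x}]$. (Saturation of $J$ with respect to $x_1\cdots x_n$ is an equivalent restatement of this fact, not an independent input.) With these minor tightenings, the scheme-theoretic reformulation and the dimension-counting argument are clean and give a complete proof, including the non-radical case that the scheme-theoretic notion of shape position is designed to capture.
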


\begin{lemma}\label{lem:shapeFormTransformation}
  Let $I\subseteq K[\mathbf{x}^{\pm}]$ be a zero-dimensional ideal. Then there exists a euclidean dense open subset $\mathcal V\subseteq \RR^{n-1}$ such that for any $(u_1,\dots,u_{n-1})\in\mathcal V\cap \ZZ^{n-1}$ the unimodular transformation
  \[ \Phi_u\colon\quad K[\mathbf{x}^{\pm}] \rightarrow K[\mathbf{x}^{\pm}],\quad x_i\mapsto
    \begin{cases}
      x_i &\text{if } i<n,\\
      x_n \prod_{i=1}^{n-1} x_i^{u_{i}} &\text{if } i=n
    \end{cases}\]
  maps $I$ into an ideal in shape position.
\end{lemma}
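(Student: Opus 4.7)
The plan is to recast shape position as a pointwise separation condition on the algebraic points of $V(I)$, and then show this condition fails only on a closed nowhere dense subset of $\RR^{n-1}$. Geometrically, $\Phi_u$ corresponds to the torus automorphism $\phi_u\colon(K^\ast)^n\to(K^\ast)^n$, $(a_1,\dots,a_n)\mapsto(a_1,\dots,a_{n-1},a_n\prod_{j<n}a_j^{u_j})$, so $V(\Phi_u(I))=\phi_u^{-1}(V(I))$, and the last coordinate of $\phi_u^{-1}(P)$ equals $P_n\prod_{j<n}P_j^{-u_j}$. Shape position of $\Phi_u(I)$ is encoded by surjectivity of $K[x_n^\pm]\to K[\mathbf x^\pm]/\Phi_u(I)$, which descends under the faithfully flat extension $K\hookrightarrow K^{\mathrm{al}}$, so I may work over $K^{\mathrm{al}}$ throughout. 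Treating first the radical case and writing $V_{K^{\mathrm{al}}}(I)=\{P^{(1)},\ldots,P^{(d)}\}$, \Cref{lem:shapeFormGroebnerBasis} combined with the Chinese remainder theorem yields that $\Phi_u(I)$ is in shape position precisely when the $d$ values $\xi_i(u):=P_n^{(i)}\prod_{j<n}(P_j^{(i)})^{-u_j}$ are pairwise distinct in $K^{\mathrm{al}}$.

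For each pair $i\neq k$, setting $\alpha_j:=P_j^{(i)}/P_j^{(k)}$ and $\beta:=P_n^{(i)}/P_n^{(k)}$ in $(K^{\mathrm{al}})^\ast$, the failure set $B_{ik}:=\{u\in\ZZ^{n-1}:\xi_i(u)=\xi_k(u)\}$ equals $\chi_{ik}^{-1}(\beta)$ for the group homomorphism $\chi_{ik}\colon\ZZ^{n-1}\to(K^{\mathrm{al}})^\ast$, $u\mapsto\prod_{j<n}\alpha_j^{u_j}$. Hence $B_{ik}$ is either empty or a single coset of $\ker\chi_{ik}\leq\ZZ^{n-1}$; the degenerate case $B_{ik}=\ZZ^{n-1}$ would force all $\alpha_j=1$ and $\beta=1$, contradicting $P^{(i)}\neq P^{(k)}$. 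The central topological observation is then that every subset of the discrete set $\ZZ^{n-1}\subseteq\RR^{n-1}$ has no limit points and empty interior, hence is closed and nowhere dense in $\RR^{n-1}$. In particular each $B_{ik}$, and consequently the finite union $B:=\bigcup_{i\neq k}B_{ik}$, is closed and nowhere dense, so $\mathcal V:=\RR^{n-1}\setminus B$ is the required open dense subset.

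The main obstacle I anticipate is the non-radical case: shape position then additionally requires $x_n\prod_{j<n}x_j^{-u_j}$ to generate each local Artinian factor of $K^{\mathrm{al}}[\mathbf x^\pm]/I$, which is possible only when those factors are cyclic as $K^{\mathrm{al}}$-algebras---an intrinsic condition on $I$ that no $\Phi_u$ can supply. Granted cyclicity, generation of the factor at $P^{(i)}$ is controlled by the linear part of $x_n\prod_{j<n}x_j^{-u_j}$ in $\mathfrak m_{P^{(i)}}/\mathfrak m_{P^{(i)}}^2$, which depends affine-linearly on $u$, so its failure locus contributes only finitely many further proper affine hyperplanes to $B$; these are again closed and nowhere dense, and the conclusion persists.
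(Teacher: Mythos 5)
Your argument is internally consistent but proves only a degenerate reading of the lemma. You set $\mathcal V:=\RR^{n-1}\setminus B$, where $B\subseteq\ZZ^{n-1}$ is the set of bad integer vectors, and then invoke the observation that \emph{any} subset of the discrete set $\ZZ^{n-1}$ is closed and nowhere dense in $\RR^{n-1}$. That is true, but it remains true even for $B=\ZZ^{n-1}$, in which case $\mathcal V\cap\ZZ^{n-1}=\emptyset$ and the lemma asserts nothing. (Ruling out $B_{ik}=\ZZ^{n-1}$ for each individual pair $i\neq k$, as you do, does not rule out the finite union covering all of $\ZZ^{n-1}$.) The lemma's role in the paper is to guarantee that a good integer vector $u$ \emph{exists}, so that a given ideal can actually be preprocessed into shape position before running \cref{algo:main}; your $\mathcal V$ provides no such guarantee. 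The paper's proof instead exhibits the bad set as a finite union of affine subspaces $W_{b^{-1}c}\subseteq\RR^{n-1}$ and argues each is \emph{proper}; since a finite union of proper affine subspaces cannot cover $\ZZ^{n-1}$, this construction would secure good integer points. That is the content your topological shortcut omits, and it is the piece you should try to prove.

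That said, you should also be aware that the paper's argument has its own gap on a closely related family of examples, so your unease is not misplaced. The claim that $N_a=\{w\in\ZZ^n:\prod_i a_i^{w_i}=1\}$ has positive corank fails whenever every $a_i$ is a root of unity: then $N_a$ has full rank, $N_a\otimes_{\ZZ}\RR=\RR^n$, and $W_a=\RR^{n-1}$ is not proper. In fact the usable form of the lemma is false as stated: take $n=2$ and $V(I)=\{(\alpha,\beta),(-\alpha,\beta),(-\alpha,-\beta)\}$ with $\alpha,\beta\in K^*$ and $K$ of characteristic different from $2$. The transformed last coordinates under $f_u^{-1}$ are $c$, $(-1)^uc$, $-(-1)^uc$ with $c=\beta\alpha^{-u}\neq 0$, and these are never pairwise distinct for any integer $u$, so no $\Phi_u$ achieves shape position. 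A correct statement needs an extra hypothesis, e.g.\ that no two points of $V(I)$ differ coordinatewise by roots of unity, or a larger family of transformations. You also correctly flag the non-radical obstruction: shape position forces each local Artinian factor to be a cyclic $K$-algebra, a hypothesis the paper's proof uses silently when it identifies shape position of $\Phi_u(I)$ with injectivity of $p_n\circ f_u^{-1}$ on $V(I)$.
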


\begin{proof}
  Without loss of generality, we may assume that the field $K$ is algebraically closed. For any $u=(u_1,\dots,u_{n-1})\in\mathcal \ZZ^{n-1}$, let $f_u \colon (K^*)^n \to (K^*)^n$ be the torus automorphism induced by $\Phi_u$, so that $V(\Phi_u(I)) = f_u^{-1}(V(I))$. Then the transformed ideal $\Phi_u(I)$ is in shape position if and only if the map $p_n \circ f_u^{-1} \colon (K^*)^n \to K^*$, $(a_1,\ldots,a_n) \mapsto a_n\cdot \prod_{i=1}^{n-1} a_i^{-u_{i}}$ is injective on the finite set $V(I)$.

  For $a \in (K^*)^n \setminus \{(1,\dots,1)\}$, the set $N_a := \{w \in \ZZ^n \mid a_1^{w_1} \dots a_n^{w_n} = 1\}$ is a $\ZZ^n$-sub\-lat\-tice of positive corank. Hence, $W_a := \{w \in \RR^{n-1} \mid (w_1,\ldots,w_{n-1},-1) \in N_a \otimes_\ZZ \RR\}$ is a proper affine subspace of $\RR^{n-1}$. By definition, for any two elements $b\neq c\in V(I)$, we have
  \[ p_n\circ f_u^{-1}(b) = p_n\circ f_u^{-1}(c)\quad \Leftrightarrow\quad (u_{1},\dots, u_{n-1})\in W_{b^{-1}c}. \]
  Thus, $\Phi_u(I)$ is in shape position if and only if $(u_{1},\ldots,u_{n-1}) \in \mathcal V \cap \ZZ^{n-1}$, where $\mathcal V := \RR^{n-1} \setminus \bigcup_{b \neq c \in V(I)} W_{b^{-1}c}$ is a euclidean dense open subset of $\RR^{n-1}$.
\end{proof}

\section{Unimodular transformations on lexicographical Gröbner bases}\label{sec:unimodular}

In this section, we introduce a special class of unimodular transformations and describe how they operate on lexicographical Gr\"obner bases in shape position.

\begin{definition}
  We will consider unimodular transformations indexed by the set
  \[\mathcal U := \{u \in \ZZ^n \mid \exists\: 1 \leq \ell \lneq n: u_\ell = -1 \text{ and } u_i \geq 0 \text{ for all } i\neq \ell\}.\]
  For any $u \in \mathcal U$, we define a unimodular ring automorphism
  \[ \varphi_u \colon K[\mathbf{x}^{\pm}] \to K[\mathbf{x}^{\pm}], \qquad
      x_i \mapsto \begin{cases} x_1^{u_1} \cdots x_{\ell-1}^{u_{\ell-1}}x_\ell^1 x_{\ell+1}^{u_{\ell+1}}\cdots x_n^{u_n} & \text{if } i = \ell, \\ x_i & \text{otherwise}, \end{cases} \]
  and a linear projection
  \[\pi_u \colon \RR^n \twoheadrightarrow \RR, \qquad
    (w_1,\ldots,w_n) \mapsto -\sum_{i=1}^{n} u_i w_i.\]
    We call such a $\varphi_u$ a \emph{slim (unimodular) transformation concentrated at $\ell$}.
\end{definition}

While our slim unimodular transformations might seem overly restrictive, the next lemma states that they are sufficient to compute arbitrary projections of tropical varieties, which is what we will need in Section~\ref{sec:algorithm}.

\begin{lemma} \label{lem:skewProjectionViaElimination}
  Let $\varphi_u$ be a slim transformation concentrated at $\ell$.
  Then
  \[\pi_u(\Trop (I)) = \Trop(\varphi_u(I) \cap K[x_\ell^{\pm}]).\]
\end{lemma}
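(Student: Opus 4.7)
The plan is to combine the change-of-variables behavior of $\Trop$ under the monomial automorphism $\varphi_u$ with the fact that tropicalization commutes with elimination.

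First, since $\varphi_u$ is a ring automorphism, it induces a bijection $\varphi_u^\sharp \colon V(\varphi_u(I)) \to V(I)$ on solution sets over $\widehat{K}^{\text{al}}$, namely
\[\varphi_u^\sharp(b_1,\dots,b_n) = (b_1,\dots,b_{\ell-1},\, b_1^{u_1}\cdots b_{\ell-1}^{u_{\ell-1}}\, b_\ell\, b_{\ell+1}^{u_{\ell+1}}\cdots b_n^{u_n},\, b_{\ell+1},\dots,b_n).\]
Applying $\widehat{\nu}^{\text{al}}$ coordinate-wise transports this to a linear bijection $\psi_u \colon \RR^n \to \RR^n$ whose $\ell$-th coordinate is $\psi_u(w)_\ell = w_\ell + \sum_{i \neq \ell} u_i w_i$ and whose other coordinates are the identity. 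Thus $\Trop(\varphi_u(I)) = \psi_u^{-1}(\Trop(I))$.

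Next, I would invoke the tropical elimination theorem for coordinate projections---a consequence of the Fundamental Theorem of Tropical Algebraic Geometry (see \cite[Thm.~3.2.3]{MS15})---which gives $\Trop(J \cap K[x_\ell^\pm]) = \overline{p_\ell(\Trop(J))}$ for any Laurent ideal $J \subseteq K[\mathbf{x}^\pm]$, where $p_\ell$ is the projection onto the $\ell$-th coordinate. Setting $J = \varphi_u(I)$ and combining with the first step yields
\[\Trop(\varphi_u(I) \cap K[x_\ell^\pm]) = \overline{(p_\ell \circ \psi_u^{-1})(\Trop(I))}.\]

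Finally, I would compute $p_\ell \circ \psi_u^{-1}$ directly: inverting $\psi_u$ amounts to solving $v_\ell + \sum_{i \neq \ell} u_i v_i = w_\ell$ for $v_\ell$, and using $u_\ell = -1$ this simplifies to $v_\ell = -\sum_{i=1}^n u_i w_i = \pi_u(w)$. Thus the right-hand side equals $\overline{\pi_u(\Trop(I))}$, and since $\Trop(I)$ is a finite union of rational polyhedra while $\pi_u \colon \RR^n \to \RR$ is a surjective linear map, the image $\pi_u(\Trop(I))$ is itself a finite union of closed intervals and points, hence already closed; the closure can therefore be dropped. The main hurdle is the second step, because the elimination statement is needed as an equality rather than a one-sided inclusion; the underlying density argument (that $p_\ell(V(\varphi_u(I)))$ is Zariski-dense in $V(\varphi_u(I) \cap K[x_\ell^\pm])$ and that its valuations are Euclidean-dense in the tropicalization) is standard but worth verifying explicitly.
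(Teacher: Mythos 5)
Your proof is correct and follows essentially the same route as the paper's: you identify the induced torus automorphism and the corresponding linear bijection on $\RR^n$, apply tropical elimination for the coordinate projection $p_\ell$, and then compute $p_\ell \circ \psi_u^{-1} = \pi_u$ using $u_\ell = -1$. The only cosmetic difference is that you make the inverse computation and the closedness of $\pi_u(\Trop(I))$ explicit, whereas the paper leaves these implicit (the latter being automatic for zero-dimensional $I$ and in any case a consequence of the polyhedrality of $\Trop(I)$).
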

\begin{proof}
  We may assume that $K$ is algebraically closed. The ring automorphism $\varphi_u$ induces a torus automorphism $f_u \colon (K^\ast)^n \to(K^\ast)^n$ with $f_u^{-1}(V(I)) = V(\varphi_u(I))$, which in turn induces a linear transformation $h_u \colon \RR^n \xrightarrow{\cong} \RR^n$ mapping $\Trop (\varphi_u(I))$ to $\Trop(I)$:
  \begin{center}
    \begin{tikzpicture}
      \node (PLeft) {$K[\mathbf{x}^{\pm}]$};
      \node[anchor=base west,xshift=35mm] (PRight) at (PLeft.base east) {$K[\mathbf{x}^{\pm}]$};

      \node[anchor=base,yshift=-6mm] (TLeft) at ($(PLeft.base)!0.5!(PRight.base)$) {induces};

      \node[anchor=base,yshift=-20mm] (TLeft) at (PLeft.base) {$(K^\ast)^n$};
      \node[anchor=base,yshift=-20mm] (TRight) at (PRight.base) {$(K^\ast)^n$};

      \node[anchor=base,yshift=-20mm] (RLeft) at (TLeft.base) {$\RR^n$};
      \node[anchor=base,yshift=-20mm] (RRight) at (TRight.base) {$\RR^n$};

      \draw[<-] (PLeft) -- node[above] {$\varphi_u$} (PRight);
      \draw[->] (TLeft) -- node[below] {$f_u$} (TRight);
      \draw[->] (RLeft) -- node[above] {$h_u$} (RRight);

      \draw[->] ($(TLeft.south)+(-0.2,0)$) -- node[left] {$\nu$} ($(RLeft.north)+(-0.2,0)$);
      \draw[->] ($(TRight.south)+(-0.2,0)$) -- node[right] {$\nu$} ($(RRight.north)+(-0.2,0)$);
      \node[anchor=base,font=\footnotesize,yshift=2mm] (xLeft) at (PLeft.north) %
      {$x_\ell\prod_{i\neq\ell}x_i^{u_i}$};
      \node[anchor=base,font=\footnotesize,yshift=2mm] (xRight) at (PRight.north)
      {$x_\ell$}; %
      \node[anchor=south,font=\footnotesize] (zLeft) at (TLeft.north) {$(z_1,\ldots,z_n)$};
      \node[anchor=south,font=\footnotesize] (zRight) at (TRight.north) {$(z_1,\ldots,z_\ell\cdot\prod_{i\neq\ell} z_i^{u_i},\ldots,z_n)$};
      \node[anchor=north,font=\footnotesize] (wLeft) at (RLeft.south) {$(w_1,\ldots,w_n)$};
      \node[anchor=north,font=\footnotesize] (wRight) at (RRight.south) {$(w_1,\ldots,w_\ell+\sum_{i\neq\ell} u_i w_i,\ldots,w_n)$};

      \draw[<-|] (xLeft) -- (xRight);
      \draw[|->] (zLeft) -- (zRight);
      \draw[|->] (wLeft) -- (wRight);
    \end{tikzpicture}
  \end{center}
  Hence, with $p_{\ell} \colon \RR^n \twoheadrightarrow \RR$ denoting the projection onto the $\ell$-th coordinate:
    \[\Trop(\varphi_u(I) \cap K[x_\ell^{\pm}]) = p_{\ell}(\Trop( \varphi_u(I))) = (p_{\ell} \circ h_u^{-1})(\Trop (I)) = \pi_u(\Trop (I)). \qedhere\]
\end{proof}

The following easy properties of slim unimodular transformations serve as a basic motivation for their inception. They map polynomials to polynomials, which is important when working with software which only supports polynomial data. Moreover, they preserve saturation and shape position for zero-dimensional ideals, which is valuable as saturating and restoring shape position as in Lemma~\ref{lem:shapeFormTransformation} are two expensive operations.

\begin{lemma} \label{lem:unimodularShapeForm}
  For any slim transformation $\varphi_u$ and any zero-dimensional ideal $I \subseteq K[\mathbf{x}^{\pm}]$, we have
  \begin{enumerate}
  \item $\varphi_u(K[\mathbf x])\subseteq K[\mathbf x]$, \label{item:preservePolynomials}
  \item $\varphi_u(I) \cap K[\mathbf x] = \varphi_u(I \cap K[\mathbf x])$, \label{item:preserveSaturation}
  \item $I$ in shape position $\Leftrightarrow \varphi_u(I)$ in shape position. \label{item:preserveShapePosition}
  \end{enumerate}
\end{lemma}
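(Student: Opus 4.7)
The plan is to handle (1) and (3) by direct inspection and concentrate the substance on (2). Item~(1) is immediate from the definition of $\mathcal U$: for $i \neq \ell$ we have $\varphi_u(x_i) = x_i \in K[\mathbf x]$, and $\varphi_u(x_\ell) = x_\ell \prod_{i \neq \ell} x_i^{u_i}$ has exponent $+1$ on $x_\ell$ and non-negative exponents $u_i$ on the other variables, hence also lies in $K[\mathbf x]$. Since $\varphi_u$ is a ring homomorphism, it therefore sends all of $K[\mathbf x]$ into $K[\mathbf x]$. For item~(3), the key observation is that $\mathcal U$ forces $\ell \lneq n$, so $\varphi_u$ fixes $x_n$ and restricts to the identity on $K[x_n^\pm]$. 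As an automorphism of $K[\mathbf x^\pm]$, it induces an isomorphism $\overline{\varphi_u} \colon K[\mathbf x^\pm]/I \xrightarrow{\cong} K[\mathbf x^\pm]/\varphi_u(I)$ that commutes with the inclusions from $K[x_n^\pm]$ on both sides; the shape position condition, which is precisely the surjectivity of that inclusion onto the quotient, therefore holds for $I$ if and only if it holds for $\varphi_u(I)$.

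For item~(2), the inclusion $\varphi_u(I \cap K[\mathbf x]) \subseteq \varphi_u(I) \cap K[\mathbf x]$ is immediate from (1) together with $I \cap K[\mathbf x] \subseteq I$. For the reverse inclusion, I set $J := \varphi_u(I \cap K[\mathbf x]) \subseteq K[\mathbf x]$ and note that, since $\varphi_u$ is an automorphism of $K[\mathbf x^\pm]$, Laurent extension commutes with it:
\[ J \cdot K[\mathbf x^\pm] \;=\; \varphi_u\bigl((I \cap K[\mathbf x]) \cdot K[\mathbf x^\pm]\bigr) \;=\; \varphi_u(I). \]
The standard contraction-of-a-localization identity then reads
\[ \varphi_u(I) \cap K[\mathbf x] \;=\; J \cdot K[\mathbf x^\pm] \cap K[\mathbf x] \;=\; J : (x_1 \cdots x_n)^\infty, \]
so the remaining task is to show that $J$ is already $(x_1 \cdots x_n)$-saturated in $K[\mathbf x]$.

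This saturation statement is the main obstacle, and it is where the zero-dimensionality of $I$ enters. Assuming without loss of generality that $K$ is algebraically closed, consider the polynomial morphism $f_u \colon K^n \to K^n$ sending $(a_1, \ldots, a_n)$ to $(a_1, \ldots, a_\ell \prod_{i \neq \ell} a_i^{u_i}, \ldots, a_n)$, which corresponds to $\varphi_u$ restricted to $K[\mathbf x]$ and whose restriction to $(K^\ast)^n$ is the torus automorphism denoted $f_u$ in Lemma~\ref{lem:skewProjectionViaElimination}. Then $V(J) = f_u^{-1}(V(I \cap K[\mathbf x])) = f_u^{-1}(V(I))$; since $V(I) \subseteq (K^\ast)^n$, any $a \in V(J)$ satisfies $a_i = f_u(a)_i \neq 0$ for $i \neq \ell$, and this in turn forces $a_\ell \neq 0$ (otherwise $f_u(a)_\ell$ would vanish). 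Hence $V(J) \subseteq (K^\ast)^n$, and this set is finite because $f_u$ is bijective on $(K^\ast)^n$. Consequently $J$ is zero-dimensional, and its associated primes are maximal ideals corresponding to points in $(K^\ast)^n$, none of which contains any $x_j$. Thus $x_1 \cdots x_n$ is a non-zero-divisor modulo $J$, so $J$ is saturated and the proof concludes.
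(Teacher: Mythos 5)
Your proof is correct and follows essentially the same route as the paper's: part~(1) by direct inspection, part~(2) by reducing to a saturation statement for the induced ideal in $K[\mathbf{x}]$ and using zero-dimensionality to place $V(I\cap K[\mathbf{x}])$ inside the torus together with $\hat{f}_u^{-1}((K^\ast)^n)=(K^\ast)^n$, and part~(3) by exploiting that $\varphi_u$ fixes $x_n$. The only difference is cosmetic: you phrase~(3) algebraically via surjectivity of $K[x_n^\pm]\to K[\mathbf{x}^\pm]/I$ rather than the paper's geometric $p_n\circ f_u=p_n$, and you spell out the associated-primes argument for saturation in~(2), which the paper leaves at the level of ``no irreducible components outside the torus.''
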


\begin{proof}\
  From the definition, it is clear that polynomials get mapped to polynomials under $\varphi_u$, showing (\ref{item:preservePolynomials}).
  In particular, $\restr{\varphi_u}{K[\mathbf{x}]} \colon K[\mathbf{x}] \to K[\mathbf{x}]$ induces a morphism $\hat{f_u} \colon K^n \to K^n$ which on the torus $(K^*)^n$ restricts to an automorphism $f_u$ with $f_u^{-1}(V(I)) = V(\varphi_u(I))$. To show (\ref{item:preserveSaturation}), we need to see that $\hat{f}_u^{-1}(V(I \cap K[\mathbf{x}])) \subseteq K^n$ does not have irreducible components supported outside the torus $(K^*)^n$. Now, $V(I \cap K[\mathbf{x}])$ is the closure of $V(I) \subseteq (K^*)^n$ in $K^n$, so by zero-dimensionality of $I$, we have $V(I \cap K[\mathbf{x}]) \subseteq (K^*)^n$. Since $\hat{f}_u^{-1}((K^*)^n) = (K^*)^n$, this proves (\ref{item:preserveSaturation}).
  Finally, we note that $\varphi_u(x_n)=x_n$, so we have $p_n \circ f_u = p_n$, where $p_n \colon (K^*)^n \to K^*$ denotes the projection onto the last coordinate. Hence, $\restr{p_n}{V(I)}$ is a closed embedding if and only if $\restr{p_n}{V(\varphi_u(I))}$ is, proving (\ref{item:preserveShapePosition}).
\end{proof}

The next Algorithm~\ref{algo:groebnerTransformation} allows us to efficiently transform a lexicographical Gr\"obner basis of $I$ into a lexicographical Gr\"obner basis of $\varphi_u(I)$. This is the main advantage of slim unimodular transformations, which we will leverage to compute $\pi_u(\Trop (I))$.

\begin{algorithm}[Slim unimodular transformation of a Gröbner basis]\label{algo:groebnerTransformation}\
  \begin{algorithmic}[1]
    \REQUIRE{$(\varphi_u,G)$, where
      \begin{itemize}[leftmargin=*]
      \item $\varphi_u$ is a slim transformation concentrated at $\ell$,
      \item $G = \{f_n,\, x_{n-1}-f_{n-1},\,\ldots,\,x_2-f_2,\,x_1-f_1\}$ is the lexicographical Gr\"obner basis of an ideal $I\subseteq K[\mathbf{x}^{\pm}]$ in shape position as in \eqref{eq:shapePosition}.
      \end{itemize}}
    \ENSURE{$G'$, the lexicographical Gröbner basis of $\varphi_u(I)$.}
    \STATE \label{algstep:minimalPolynomial} In the univariate polynomial ring $K[x_n]$, compute $f_\ell'$ with $\deg(f_\ell')<\deg(f_n)$ and
    \[ f_\ell' \equiv \Big(x_n^{u_n}\cdot\prod_{\substack{i=1\\ i\neq \ell}}^{n-1} f_i^{u_i}\Big)^{-1} \cdot f_\ell \pmod{f_n}. \]
    \RETURN{$G' := \{f_n,\, x_{n-1}-f_{n-1},\,\ldots,\, x_\ell-f_\ell',\,\ldots,\,x_1-f_1\}$}.
  \end{algorithmic}
\end{algorithm}

\begin{proof}[Correctness of \cref{algo:groebnerTransformation}]
  The polynomial ideal $I \cap K[\mathbf{x}]$ is saturated with respect to the product of variables $x_1 \cdots x_n$, and is by assumption generated by $G$.
  This implies that $f_n$ is relatively prime to each $f_i$ for $i < n$ and to $x_n$. In particular, the inverse in $K[x_n]/(f_n)$ showing up in the definition of $f'_\ell$ is well-defined.
  The ideal $\varphi_u(I) \subseteq K[\mathbf{x}^{\pm}]$ is generated by
    \[\varphi_u(G) = \{f_n,\, x_{n-1}-f_{n-1},\,\ldots,\, \big(\prod_{\substack{i=1 \\ i \neq \ell}}^n x_i^{u_i}\big) x_\ell-f_\ell,\,\ldots,\,x_1-f_1\}.\]
  Note that the expression $(\prod_{i \neq \ell} x_i^{u_i}) x_\ell-f_\ell$ is equivalent to $x_\ell-f_\ell'$ modulo the ideal $(f_n,\, x_i - f_i \mid i \neq \ell, n)$. It follows that $\varphi_u(I)$ is generated by $G'$, and it is clear that $G'$ is a lexicographical Gr\"obner basis.
\end{proof}

\section{Computing zero-dimensional tropical varieties via projections}\label{sec:algorithm}

In this section, we assemble our algorithm for computing $\Trop(I)$ from a zero-dimensional ideal $I \subseteq K[\mathbf x^{\pm}]$. This is done in two stages, see Figure~\ref{fig:algorithm}: In the first stage, we project $\Trop(I)$ onto all coordinate axes of $\RR^n$. In the second stage, we iteratively glue the coordinate projections together until $\Trop(I)$ is fully assembled.

\begin{figure}[h]
  \centering
  \begin{tikzpicture}[y=7.5mm,every node/.style={font=\footnotesize}]
    \draw (-3,0) -- (4,0) node[right] (lin1) {$\RR\cdot e_1$};
    \node[anchor=base west,xshift=-2mm] at (lin1.base east) {$\supseteq p_{\{1\}}(\Trop(I))$};
    \draw (0,-3) -- (0,4) node[above] (lin2) {$\RR\cdot e_2$};
    \node[anchor=base west,xshift=-2mm] at (lin2.base east) {$\supseteq p_{\{2\}}(\Trop(I))$};

    \draw[dashed,opacity=0.5]
    (1,-3) -- (1,4)
    (2,-3) -- (2,4)
    (3,-3) -- (3,4)
    (-1,-3) -- (-1,4)
    (-2,-3) -- (-2,4)
    (-3,1) -- (4,1)
    (-3,2) -- (4,2)
    (-3,3) -- (4,3)
    (-3,-2) -- (4,-2)
    (-3,-1) -- (4,-1);

    \fill[draw,fill=white] (1,0) circle (1.75pt)
    (2,0) circle (1.75pt)
    (3,0) circle (1.75pt)
    (-2,0) circle (1.75pt)
    (-1,0) circle (1.75pt)
    (0,1) circle (1.75pt)
    (0,2) circle (1.75pt)
    (0,3) circle (1.75pt)
    (0,-2) circle (1.75pt)
    (0,-1) circle (1.75pt);

    \draw[blue] (2,-3) -- (-2.667,4) node[above] (linu) {$\RR\cdot u$};
    \node[blue,anchor=base east,xshift=2mm] at (linu.base west) {$\pi_u(\Trop(I))\subseteq$};
    \draw[blue,dashed,opacity=0.7]
    (-1,-1) -- ++(40:12.5mm) coordinate (p1) -- ++(40:42.5mm)
    (-1,-1) -- ++(220:22.5mm)
    (-2,2) -- ++(40:5mm) coordinate (p2) -- ++(40:15mm)
    (-2,2) -- ++(220:12.5mm)
    (1,3) -- ++(220:22.5mm) coordinate (p3) -- ++(220:29mm)
    (1,3) -- ++(40:8mm)
    (2,-2) -- ++(220:5mm) coordinate (p4) -- ++(220:5mm)
    (2,-2) -- ++(40:25mm)
    (3,1) -- ++(220:27.5mm) coordinate (p5) -- ++(220:17.5mm)
    (3,1) -- ++(40:12.5mm);
    \fill[draw=blue,fill=white]
    (p1) circle (1.75pt)
    (p2) circle (1.75pt)
    (p3) circle (1.75pt)
    (p4) circle (1.75pt)
    (p5) circle (1.75pt);
    \fill (1,3) circle (2pt)
    (3,1) circle (2pt)
    (-2,2) circle (2pt)
    (-1,-1) circle (2pt)
    (2,-2) circle (2pt);
  \end{tikzpicture}\vspace{-2mm}
  \caption{Computing zero-dimensional tropical varieties via projections.}
  \label{fig:algorithm}
\end{figure}
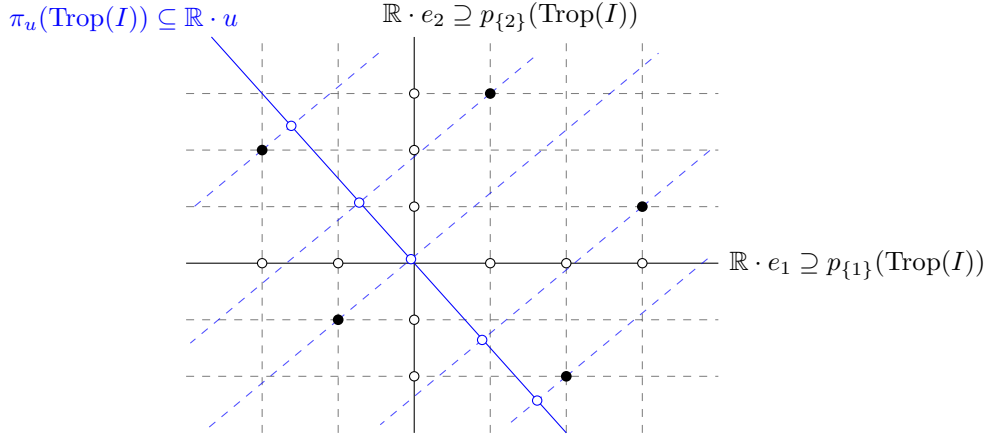

For the sake of simplicity, all algorithms contain some elements of ambiguity to minimise the level of technical detail. To see how these ambiguities are resolved in the actual implementation, see Section~\ref{sec:implementation}. Moreover, we will only consider $\Trop(I)$ as points in $\RR^n$ without multiplicities. It is straightforward to generalise the algorithms to work with $\Trop(I)$ as points in $\RR^n$ with multiplicities, which is how they are implemented in \textsc{Singular}.

The following algorithm merges several small projections into a single large projection. For clarity, given a finite subset $A\subseteq\{1,\dots,n\}$, we use $\RR^A$ to denote the linear subspace of $\RR^n$ spanned by the unit vectors indexed by $A$ and $p_A$ to denote the projection $\RR^n\twoheadrightarrow \RR^A$.

\begin{algorithm}[gluing projections]\label{algo:glue}\
  \begin{algorithmic}[1]
    \REQUIRE{$(G,p_{A_1}(\Trop(I)),\dots,p_{A_k}(\Trop(I)))$, where
      \begin{itemize}[leftmargin=*]
      \item $G$ is the lexicographical Gr\"obner basis of a zero-dimensional ideal ${I\subseteq K[\mathbf x^{\pm}]}$ in shape position as in \eqref{eq:shapePosition},
      \item $A_1,\ldots,A_k\subseteq \{1,\ldots,n\}$ are non-empty sets.
      \end{itemize}}
    \ENSURE{$p_A(\Trop(I)) \subseteq \RR^A$, where $A := A_1 \cup \ldots \cup A_k$.}
    \STATE Construct the candidate set
    \[ T := \Big\{w \in \RR^A \bigmid p_{A_i}(w) \in p_{A_i}(\Trop(I)) \text{ for } i=1,\ldots,k\Big\}.\]\vspace{-1em}
    \STATE \label{algstep:pickU} Pick a slim transformation $\varphi_u$ such that the following map is injective:
    \[ \restr{\pi_u}{T} \colon T \to \RR, \qquad (w_i)_{i\in A}\mapsto -\sum_{i\in A} u_i w_i.\]\vspace{-1em}
    \STATE \label{algstep:callAlgoUnimodular} Using \cref{algo:groebnerTransformation}, transform $G$ into a Gr\"obner basis $G'$ of $\varphi_{u}(I)$:
    \[ G' := \{f_n,x_{n-1}-f_{n-1},\ldots,x_\ell-f_\ell',\ldots,x_1-f_1\}. \]\vspace{-1em}
    \STATE \label{algstep:computeMinPoly} Compute the minimal polynomial $\mu \in K[z]$ of $\overbar{f_\ell'} \in K[x_n]/(f_n)$ over $K$ and
    read off $\Trop(\mu) \subseteq \RR$ from its Newton polygon.
    \RETURN{$\{w \in T \mid \pi_u(w) \in \Trop(\mu)\}$.}
  \end{algorithmic}
\end{algorithm}

\begin{proof}[Correctness of \cref{algo:glue}]
  First, we argue that line~\ref{algstep:pickU} can be realised, i.e., we show the existence of a slim unimodular transformation $\varphi_u$ such that $\pi_u$ is injective on the candidate set $T$. Pick $\ell \neq n$ and denote $B := \{1,\ldots,n\} \setminus \{\ell\}$. It suffices to show that the set
    \[Z := \{v \in \RR_{\geq 0}^B \mid \restr{\pi_{v-e_\ell}}{T} \text{ is injective}\} \subseteq \RR^B \]
  contains an integer point. By the definition of $\pi_{v-e_\ell}$, we see that
  \[Z = \RR_{\geq 0}^B \setminus \bigcup_{w \neq w' \in T} H_{w-w'}, \text{ where } H_{w-w'} := \Big\{v \in \RR^B \bigmid \sum_{i \in B} (w_i-w_i') v_i = z_\ell\Big\}. \]
  This describes $Z$ as the complement of an affine hyperplane arrangement in $\RR^B$ inside the positive orthant. Therefore, $Z$ must contain an integer point.

  Next, we note that the candidate set $T$ contains $p_A(\Trop(I))$ by construction, so injectivity of $\restr{\pi_u}{T}$ shows that $p_A(\Trop(I)) = \{w \in T \mid \pi_u(w) \in \pi_u(\Trop(I))\}$. Therefore, the correctness of the output will follow from showing $\pi_u(\Trop(I)) = \Trop(\mu)$. By \cref{lem:skewProjectionViaElimination}, it suffices to prove that $\mu(x_\ell) \in K[x_\ell]$ generates the elimination ideal $\varphi_u(I) \cap K[x_\ell^{\pm}]$.

  For this, we observe that reducing a univariate polynomial $g \in K[x_\ell]$ with respect to the lexicographical Gröbner basis $G'$ substitutes $x_\ell$ by $f_\ell'$ to obtain a univariate polynomial in $K[x_n]$ and then reduces the result modulo $f_n$. In particular, this shows that such $g \in K[x_\ell]$ lies in the ideal $\varphi_u(I)$ if and only if $g(\overbar{f_\ell'}) = 0$ in $K[x_n]/(f_n)$. Hence, the elimination ideal $\varphi_u(I) \cap K[x_\ell]$ is generated by $\mu(x_\ell)$.
\end{proof}

The next algorithm computes $\Trop(I)$ by projecting it onto all coordinate axes and gluing the projections together via Algorithm~\ref{algo:glue}.

\begin{algorithm}[tropical variety via projections]\label{algo:main}\
  \begin{algorithmic}[1]
    \REQUIRE{$G = \{f_n,\, x_{n-1}-f_{n-1},\, \ldots,\, x_2-f_2,\, x_1-f_1\}$, the lexicographical Gr\"obner basis of a zero-dimensional ideal $I\subseteq K[\mathbf x^{\pm}]$ in shape position as in \eqref{eq:shapePosition}.}
    \ENSURE{$\Trop(I) \subseteq \RR^n$}
    \STATE Compute the projection onto the last coordinate $p_{\{n\}}(\Trop(I)) = \Trop(f_n)$. \label{algstep:projectPrincipal1}
    \FOR{$k\in\{1,\ldots,n-1\}$}
    \STATE Compute the minimal polynomial $\mu_k \in K[z]$ of $\overbar{f_k} \in K[x_n]/(f_n)$ over $K$ and read off the projection
      $p_{\{k\}}(\Trop(I)) = \Trop(\mu_k)$.%
       \label{algstep:projectPrincipal2}
    \ENDFOR
    \STATE Initialise a set of computed projections $W\!:=\!\{p_{\{1\}}(\Trop(I)),\dots,p_{\{n\}}(\Trop(I))\}$. \hspace{-1cm}
    \WHILE{$W \not\ni p_{\{1,\ldots,n\}}(\Trop(I))$}
    \STATE \label{algstep:pickingGlue} Pick projections $p_{A_1}(\Trop(I)),\dots,p_{A_k}(\Trop(I))\in W$ to be merged such that $p_A(\Trop(I)) \notin W$ for $A := A_1\cup \dots \cup A_k$.
    \STATE Using Algorithm~\ref{algo:glue}, compute $p_A(\Trop(I))$. \label{algstep:pickMerge}
    \STATE $W:=W\cup\{p_A (\Trop(I))\}$.
    \ENDWHILE
    \RETURN{$p_{\{1,\ldots,n\}}(\Trop(I))$.}
  \end{algorithmic}
\end{algorithm}

\begin{proof}[Correctness of \cref{algo:main}]
  Since $G$ is the lexicographical Gröbner basis of $I$, the elimination ideal $I \cap K[x_n^{\pm}]$ is generated by $f_n$, so we indeed have the equality $p_{\{n\}}(\Trop(I)) = \Trop(f_n)$ in line~\ref{algstep:projectPrincipal1}. The equality $p_{\{k\}}(\Trop(I)) = \Trop(\mu_k)$ in line~\ref{algstep:projectPrincipal2} holds because $\mu_k(x_\ell) \in K[x_\ell]$ generates the elimination ideal $I \cap K[x_\ell^{\pm}]$ by the same argument as in the proof of correctness of \cref{algo:glue}.

  In every iteration of the \texttt{while} loop, the set $W$ grows in size. Since there are only finitely many coordinate sets $A \subseteq \{1,\ldots, n\}$, we will after finitely many iterations compute $\Trop(I) = p_{\{1,\ldots,n\}}(\Trop(I))$, hence the while loop terminates.
\end{proof}

\begin{example}\label{ex:mainAlgorithm}
  Consider $K=\mathbb Q$ equipped with the 2-adic valuation and the ideal
  \[ I = ( \underbrace{2+x_3+x_3^2+x_3^3+2x_3^4}_{=:f_3}, x_2-\underbrace{2x_3}_{=:f_2}, x_1-\underbrace{4x_3}_{=:f_1} )\subseteq K[x_1^{\pm},x_2^{\pm},x_3^{\pm}]. \]
  This ideal is in shape position by \cref{lem:shapeFormGroebnerBasis}. From the Newton polygon of $f_3$, see \cref{fig:newtonPolygon} (left), it is not hard to see that
  \begin{align*}
    & p_{\{3\}}(\Trop(I)) = \Trop(f_3) = \{ -1,\mathbf{0},1 \}, \\
    & p_{\{2\}} (\Trop(I)) = \{\lambda + 1\mid \lambda\in p_{\{3\}}(\Trop(I))\} = \{ 0,\mathbf{1},2 \}, \\
    & p_{\{1\}} (\Trop(I)) = \{\lambda + 2\mid \lambda\in p_{\{3\}}(\Trop(I))\} =  \{ 1,\mathbf{2},3 \},
  \end{align*}
  where points with multiplicity $2$ are highlighted in bold.
  To merge $p_{\{1\}}(\Trop(I))$ and $p_{\{2\}}(\Trop(I))$, we consider the following projection that is injective on the candidate set $T:=p_{\{1\}}(\Trop(I))\times p_{\{2\}}(\Trop(I))$:
  \[ \pi_{(-1,3,0)}:\quad T\longrightarrow \RR, \quad (w_1,w_2)\longmapsto w_1-3w_2. \]
  The corresponding unimodular transformation $\varphi_{(-1,3,0)}$ sends $x_1$ to $x_1x_2^3$ and hence $\varphi_{(-1,3,0)}(I)$ is generated by $\{f_3,x_2-f_2, x_1x_2^3-4x_3\}$, which Algorithm~\ref{algo:groebnerTransformation} transforms into the following lexicographical Gr\"obner basis:
  \[ \varphi_{(-1,3)}(I) = \Big(f_3,\, x_2-f_2,\,  x_1-(\underbrace{\textstyle\frac{1}{4}x_3^3-\textstyle\frac{3}{8}x_3^2-\textstyle\frac{1}{8}x_3-\textstyle\frac{1}{8}}_{=:f_1'}) \Big). \]
  The minimal polynomial of $\overbar{f_1'}$ in $K[x_3]/(f_3)$ over $K$ can be computed as the resultant
  \[ \Res_{x_3}(f_3,x_1-f_1') = 8x_1^4+3x_1^3+\textstyle\frac{7}{2}x_1^2+\textstyle\frac{3}{4}x_1+\textstyle\frac{1}{2}. \]
  Figure~\ref{fig:newtonPolygon} (middle) shows the Newton polygon of the resultant, from which we see:
  \[ \Trop(\Res_{x_3}(f_3,x_1-f_1')) = \{ -3, \mathbf{-1}, 1 \}. \]
  Thus,
  \[ p_{\{1,2\}}(\Trop(I)) = \{ (3,2), \mathbf{(2,1)}, (1,0) \}. \]

  To merge $p_{\{1,2\}}(\Trop(I))$ and $p_{\{3\}}(\Trop(I))$, we consider the following projection that is injective on the candidate set $T:=p_{\{1,2\}}(\Trop(I))\times p_{\{3\}}(\Trop(I))$:
  \[ \pi_{(-1,0,3)}:\quad T\longrightarrow \RR, \quad (w_1,w_2,w_3)\longmapsto w_1-3w_3. \]
  The corresponding unimodular transformation $\varphi_{(-1,0,3)}$ sends $x_1$ to $x_1x_3^3$ and hence $\varphi_{(1,0,3)}(I)$ is generated by $\{f_3,x_2-f_2, x_1x_3^3-4x_3\}$, which Algorithm~\ref{algo:groebnerTransformation} transforms into the following lexicographical Gr\"obner basis:
  \[ \varphi_{(-1,0,3)}(I) = \Big( f_3,\, x_2-f_2,\, x_1-(\underbrace{2x_3^3-3x_3^2-x_3-1}_{=:f_1''}) \Big). \]
  Another resultant computation yields the minimal polynomial of $\overbar{f_1''} \in K[x_3]/(f_3)$ over $K$:
  \[ \Res_{x_3}(f_3,x_1-f_1'') = 8x_1^4+24x_1^3+224x_1^2+384x_1+2048. \]
  Figure~\ref{fig:newtonPolygon} (right) shows the Newton polygon of the resultant, from which we see:
  \[ \Trop(\Res_{x_3}(f_3,x_1-f_1'')) = \{ 0, \mathbf{2}, 4 \}, \]
  and thus
  \[ \Trop(I) = p_{\{1,2,3\}}(\Trop(I)) = \{ (3,2,1), \mathbf{(2,1,0)}, (1,0,-1) \}. \]
\end{example}

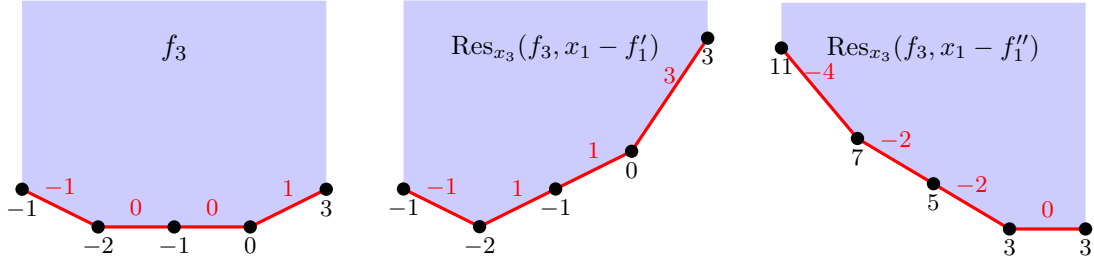
\begin{figure}[h]
  \centering
  \hfill
  \begin{tikzpicture}[y=5mm,every node/.style={font=\scriptsize}]
    \node (x0) at (0,1) {};
    \node (x1) at (1,0) {};
    \node (x2) at (2,0) {};
    \node (x3) at (3,0) {};
    \node (x4) at (4,1) {};

    \fill[blue!20] (x0.center) -- (x1.center) -- (x3.center) -- (x4.center) -- ++(0,5) -- ++(-4,0) -- cycle;
    \draw[red,very thick] (x0.center) -- node[red,above] {$-1$} (x1.center) -- node[red,above] {$0$} (x2.center) -- node[red,above] {$0$} (x3.center) -- node[red,above] {$1$} (x4.center);
    \fill (x0) circle (2.5pt);
    \fill (x1) circle (2.5pt);
    \fill (x2) circle (2.5pt);
    \fill (x3) circle (2.5pt);
    \fill (x4) circle (2.5pt);

    \node[anchor=north] at (x0) {$-1$};
    \node[anchor=north] at (x1) {$-2$};
    \node[anchor=north] at (x2) {$-1$};
    \node[anchor=north] at (x3) {$0$};
    \node[anchor=north] at (x4) {$3$};

    \node[font=\small] at (2,4.75) {$f_3$};
  \end{tikzpicture}
  \hfill
  \begin{tikzpicture}[y=5mm,every node/.style={font=\scriptsize}]
    \node (x0) at (0,-1) {};
    \node (x1) at (1,-2) {};
    \node (x2) at (2,-1) {};
    \node (x3) at (3,0) {};
    \node (x4) at (4,3) {};

    \fill[blue!20] (x0.center) -- (x1.center) -- (x3.center) -- (x4.center) -- ++(0,1) -- ++(-4,0) -- cycle;
    \draw[red,very thick] (x0.center) -- node[red,above] {$-1$} (x1.center) -- node[red,above] {$1$} (x2.center) -- node[red,above] {$1$} (x3.center) -- node[red,above] {$3$} (x4.center);
    \fill (x0) circle (2.5pt);
    \fill (x1) circle (2.5pt);
    \fill (x2) circle (2.5pt);
    \fill (x3) circle (2.5pt);
    \fill (x4) circle (2.5pt);

    \node[anchor=north] at (x0) {$-1$};
    \node[anchor=north] at (x1) {$-2$};
    \node[anchor=north] at (x2) {$-1$};
    \node[anchor=north] at (x3) {$0$};
    \node[anchor=north] at (x4) {$3$};

    \node[font=\footnotesize] at (2,2.75) {$\text{Res}_{x_3}(f_3,x_1-f_1')$};
  \end{tikzpicture}
  \hfill
  \begin{tikzpicture}[y=3mm,every node/.style={font=\scriptsize}]
    \node (x0) at (0,11) {};
    \node (x1) at (1,7) {};
    \node (x2) at (2,5) {};
    \node (x3) at (3,3) {};
    \node (x4) at (4,3) {};

    \fill[blue!20] (x0.center) -- (x1.center) -- (x3.center) -- (x4.center) -- ++(0,10) -- ++(-4,0) -- cycle;
    \draw[red,very thick] (x0.center) -- node[red,above] {$-4$} (x1.center) -- node[red,above] {$-2$} (x2.center) -- node[red,above] {$-2$} (x3.center) -- node[red,above] {$0$} (x4.center);
    \fill (x0) circle (2.5pt);
    \fill (x1) circle (2.5pt);
    \fill (x2) circle (2.5pt);
    \fill (x3) circle (2.5pt);
    \fill (x4) circle (2.5pt);

    \node[anchor=north] at (x0) {$11$};
    \node[anchor=north] at (x1) {$7$};
    \node[anchor=north] at (x2) {$5$};
    \node[anchor=north] at (x3) {$3$};
    \node[anchor=north] at (x4) {$3$};

    \node[font=\footnotesize] at (2,11) {$\text{Res}_{x_3}(f_3,x_1-f_1'')$};
  \end{tikzpicture}
  \hfill
  \caption{Newton polygons of $f_3$ and the two resultants in Example~\ref{ex:mainAlgorithm}. Below each vertex is its height, above each edge is its slope.}\label{fig:newtonPolygon}
\end{figure}

\section{Implementation}\label{sec:implementation}

In this section, we reflect on some design decisions that were made in the implementation of the algorithms in the \textsc{Singular} library \texttt{tropicalProjection.lib}. While the reader who is only interested in the algorithms, their performance, and their complexity may skip this section without impeding their understanding, we thought it important to include this section for the reader who is interested in the actual implementation.

\subsection{Picking unimodular transformations in Algorithm~\ref{algo:glue} Line~\ref{algstep:pickU}} \label{ssec:implGlue}
As $\pi_u|_{T}$ is injective for generic $u\in\mathcal U$, it seems reasonable to sample random $u \in \mathcal U$ until the corresponding projection is injective on the candidate set. Our implementation however iterates over all $u\in\mathcal U$ in increasing $\ell_1$-norm until the smallest one with injective $\pi_u|_{T}$ is found. This is made in an effort to keep the slim unimodular transformation $\varphi_u(I)$ as simple as possible, since Lines~\ref{algstep:callAlgoUnimodular}--\ref{algstep:computeMinPoly} are the main bottlenecks of our algorithm.

\subsection{Transforming Gr\"obner bases in Algorithm~\ref{algo:glue} Line~\ref{algstep:callAlgoUnimodular}} \label{ssec:implGlue}
As mentioned before, Lines~\ref{algstep:callAlgoUnimodular}--\ref{algstep:computeMinPoly} are the main bottlenecks of our algorithm. Two common reasons why polynomial computations may scale badly are an explosion in degree or in coefficient size. The degree of the polynomials is not problematic in our algorithm, as using Algorithm~\ref{algo:groebnerTransformation} in Line~\ref{algstep:callAlgoUnimodular} only incurs basic arithmetic operations in $K[x_n]/(f_n)$ whose elements can be represented by polynomials of degree bounded by $\deg(f_n)$, while the degree of the minimal polynomial in Line~\ref{algstep:computeMinPoly} also is bounded by $\deg(f_n)$. Therefore, the only aspect that needs to be controlled in our computation is the size of the coefficients.

Coefficient explosion is a common problem for computing inverses in $K[x_n]/(f_n)$ via the Extended Euclidean Algorithm \cite[\S 6.1]{GG13}. To make matters worse, the polynomial $\overbar{h} := \overbar{x_n}^{u_n}\cdot\prod_{i \neq \ell,n} \overbar{f_i}^{u_i} \in K[x_n]/(f_n)$ to be inverted in \cref{algo:groebnerTransformation} usually already has large coefficients. However, we can exploit the fact that the minimal polynomial of $\overbar{f_\ell'} \in K[x_n]/(f_n)$ is $\mu = \sum_{i=0}^k a_i z^i$ if and only if the minimal polynomial of $(\overbar{f_\ell'})^{-1}$ is $\sum_{i=0}^k a_{k-i} z^i$. Instead of computing $\overbar{f_\ell'} = \overbar{h}^{-1} \overbar{f_\ell}$ in \cref{algo:groebnerTransformation}, it therefore suffices to compute $(\overbar{f_\ell'})^{-1} = \overbar{h} \cdot (\overbar{f_\ell})^{-1} \in K[x_n]/(f_n)$, which is easier as $\overbar{f_\ell}$ has generally smaller coefficients than $\overbar{h}$ and is independent of $u$, so its inversion modulo $f_n$ is much faster.

\subsection{Computing minimal polynomials in Algorithm~\ref{algo:glue} Line~\ref{algstep:computeMinPoly}} \label{ssec:implGlue}
The computation of minimal polynomials for elements in $K[x_n]/(f_n)$ can be carried out in many different ways, for example using:
\setlist[description]{font=\normalfont\itshape\textcolor{blue}}
\begin{description}[leftmargin=*]
\item[Resultants] We can compute the resultant of the two polynomials $f_n$ and %
$hx_n-f_\ell \in K[x_\ell, x_n]$ with respect to the variable $x_n$ by standard resultant algorithms. The minimal polynomial $\mu(x_\ell) \in K[x_\ell]$ is the squarefree part of the resultant.
\item[Linear Algebra] Let $k \in \NN$ be minimal such that in the finite-dimensional $K$-vector space $K[x_n]/(f_n)$ the set of polynomials $\{\overbar{h}^{d-i} \overbar{f_\ell}^i \mid i = 0,\ldots, k\}$ is linearly dependent, where $d := \deg(f_n)$. We can find a linear dependence $\sum_{i=0}^k a_i \overbar{h}^{d-i} \overbar{f_\ell}^i = 0$ and conclude that $\mu = \sum_{i=0}^k a_{k-i} z^i$.
\item[Gröbner bases] Note that $\{f_n,x_\ell-f_\ell'\} \subseteq K[x_\ell,x_n]$ forms a Gröbner basis with respect to the lexicographical ordering with $x_n \prec x_\ell$. We can transform this to a Gröbner basis with respect to the lexicographical ordering with $x_\ell \prec x_n$ using \texttt{FGLM} \cite{FGLM93} and read off the eliminant $\mu(x_\ell)$ as the generator of the elimination ideal $(x_\ell-f_\ell',f_n) \cap K[x_\ell]$.
\end{description}
For polynomials with small coefficients, the implementation using \textsc{Singular}'s resultants seemed the fastest, but \textsc{Singular}'s \texttt{FGLM} seems to be best when dealing with very large coefficients.

For $K=\QQ$ however, we can use a modular approach thanks to the \textsc{Singular} library \texttt{modular.lib} \cite{modularlib}: It computes the minimal polynomial over $\FF_p$ for several primes $p$ using any of the above methods, then lifts the results to $\QQ$. This modular approach avoids problems caused by very large coefficients and works particularly well using the method based on linear algebra from above. We can check if the lifted $\mu$ is correct by testing whether $\mu(\overbar{f_\ell'}) = 0$ in $K[x_n]/(f_n)$.

\subsection{Picking gluing strategies in Algorithm~\ref{algo:main} Line~\ref{algstep:pickingGlue}} \label{ssec:gluingStrategies}
\cref{algo:main} is formulated in a flexible way: Different strategies of realising the choice of coordinate sets $A_1,\dots,A_k$ in line~\ref{algstep:pickingGlue} can adapt to the needs of a specific tropicalization problem. The four gluing strategies that follow seem very natural and are implemented in our \textsc{Singular} library. See \cref{fig:strategies} for an illustration in the case $n=5$.
\setlist[description]{font=\normalfont}
\begin{description}[leftmargin=*]
  \item[\texttt{oneProjection}] Only a single iteration of the \texttt{while} loop, in which we pick $k=n$ and $A_i = \{i\}$ for $i=1,\ldots,n$.
  \item[\texttt{sequential}] $n-1$ iterations of the \texttt{while} loop, during which we pick $k=2$ and $A_1 = \{1,\ldots,i\}$ and $A_2 = \{i+1\}$ in the $i$-th iteration.
  \item[\texttt{regularTree($k$)}] $n-1$ iterations of the \texttt{while} loop, which can be partially run in parallel in $\lceil \log_k n \rceil$ batches. In each batch we merge $k$ of the previous projections.
  \item[\texttt{overlap}] $(n-1)n/2$ iterations of the \texttt{while} loop, which can be partially run in parallel in $n-1$ batches.
  During batch $i$, we pick $k=2$ and $A_1 = \{1,\ldots,i\}$, $A_2 = \{1,\ldots,i-1,j\}$ for $j>i$.
\end{description}

\begin{figure}
  \begin{tabular}{cp{0.5em}c}
  \begin{tikzpicture}[level distance=6em, level/.style={sibling distance=6em}, every node/.style = {align=center, scale=0.5}, scale=0.5]
  \node (root) {$\{1,2,3,4,5\}$} [grow' = up]
  child[grow=down, level distance=3em]{edge from parent[draw=none]}
  child
  {
    node {$\{1\}$}
  }
  child
  {
    node {$\{2\}$}
  }
  child
  {
    node {$\{3\}$}
  }
  child
  {
    node {$\{4\}$}
  }
  child
  {
    node {$\{5\}$}
  }
  child[grow=down, level distance=3em]{edge from parent[draw=none]};
  \end{tikzpicture} & &
  \begin{tikzpicture}[level distance=2.75em,level/.style={sibling distance=10em}, every node/.style = {align=center, scale=0.5}, scale=0.5]
  \node {$\{1,2,3,4,5\}$} [grow' = up]
  child
  {
    node {$\{1,2,3,4\}$}
    child
    {
      node {$\{1,2,3\}$}
      child
      {
        node {$\{1,2\}$}
        child
        {
          node {$\{1\}$}
        }
        child[grow=up]{node{$\{2\}$}}
      }child[grow=up]{child{node{$\{3\}$}}}
    }child[grow=up]{child{child{node{$\{4\}$}}}}
  }child[grow=up]{child{child{child{node{$\{5\}$}}}}};
  \end{tikzpicture}\\
  \texttt{oneProjection} & & \texttt{sequential} \\[2em]
  \begin{tikzpicture}[level 1/.style={sibling distance=20em}, level 2/.style={sibling distance=10em}, level 3/.style={sibling distance=5em}, every node/.style = {align=center, scale=0.5}, scale=0.5]
  \def\ld{5em}
  \node (root) {$\{1,2,3,4,5\}$} [grow' = up, level distance=\ld]
  child
  {
    node {$\{1,2,3,4\}$}
    child
    {
      node {$\{1,2\}$}
      child
      {
        node {$\{1\}$}
      }
      child
      {
        node {$\{2\}$}
      }
    }
    child
    {
      node {$\{3,4\}$}
      child
      {
        node {$\{3\}$}
      }
      child
      {
        node {$\{4\}$}
      }
    }
  }
  child[grow=78, level distance=1/sin(78)*\ld]{child{child{node {$\{5\}$}}}};
  \end{tikzpicture} & &
  \begin{tikzpicture}[level distance=3.8em, level/.style={sibling distance=9em}, every node/.style = {align=center, scale=0.5}, scale=0.5]
  \node (root) {$\{1,2,3,4,5\}$} [grow' = up]
  child
  {
    node (p1234) {$\{1,2,3,4\}$}
    child
    {
      node (p123) {$\{1,2,3\}$}
      child
      {
        node (p12) {$\{1,2\}$}
        child
        {
          node (p1) {$\{1\}$}
        }
        child
        {
          node (p2) {$\{2\}$}
        }
      }
      child
      {
        node (p13) {$\{1,3\}$}
        child[missing]
        child
        {
          node (p3) {$\{3\}$}
        }
      }
    }
    child
    {
      node (p124) {$\{1,2,4\}$}
      child[missing]
      child
      {
        node (p14) {$\{1,4\}$}
        child[missing]
        child
        {
          node (p4) {$\{4\}$}
        }
      }
    }
  }
  child
  {
    node (p1235) {$\{1,2,3,5\}$}
    child[missing]
    child
    {
      node (p125) {$\{1,2,5\}$}
      child[missing]
      child
      {
        node (p15) {$\{1,5\}$}
        child[missing]
        child
        {
          node (p5) {$\{5\}$}
        }
      }
    }
  };
  \draw (p13) -- (p1);
  \draw (p14) -- (p1);
  \draw (p15) -- (p1);
  \draw (p124) -- (p12);
  \draw (p125) -- (p12);
  \draw (p1235) -- (p123);
  \end{tikzpicture}
  \\
  \texttt{regularTree(2)} & & \texttt{overlap}
  \end{tabular}
  \caption{Visualisation of different gluing strategies.}
  \label{fig:strategies}
\end{figure}
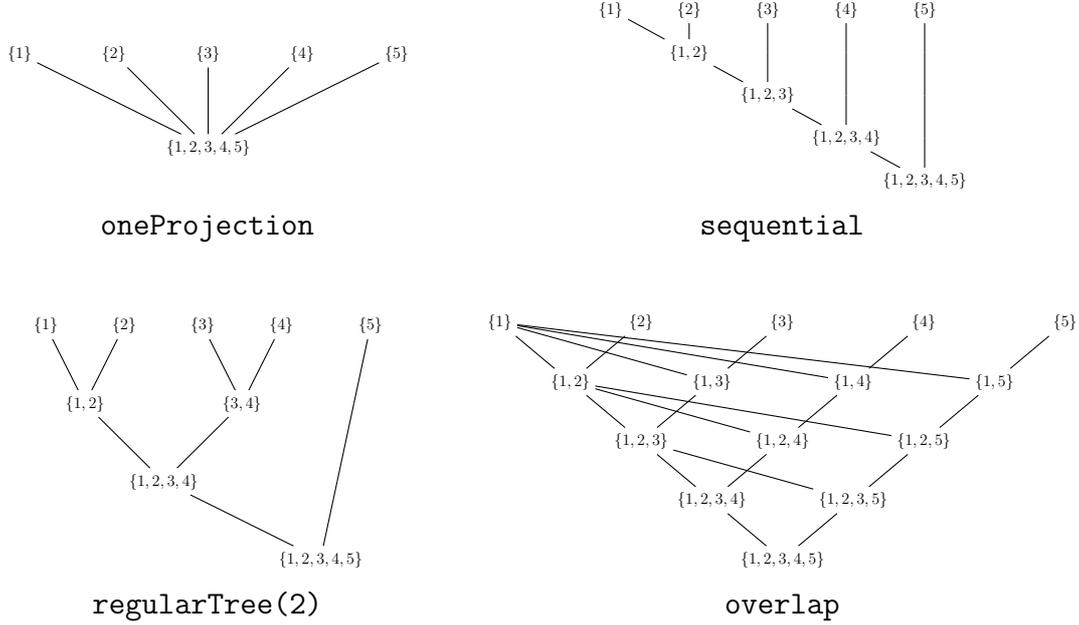

\medskip

\texttt{oneProjection} is the simplest strategy, requiring only one unimodular transformation. For examples of very low degree, it is the best strategy due to its minimal overhead. For examples of higher degree $d$, the candidate set $T$ in \cref{algo:glue} can become quite large, at worst $|T| = d^n$. This generally leads to larger $u \in \mathcal U$ in Line~\ref{algstep:pickU} and causes problems due to coefficient growth.

\texttt{sequential} avoids the problem of a large candidate set $T$ by only gluing two projections at a time, guaranteeing  $|T|\leq d^2$. This comes at the expense of computing $n-1$ unimodular transformations, but even for medium-sized instances we observe considerable improvements compared to \texttt{oneProjection}. In \cref{sec:complexity}, we prove that \texttt{sequential} guarantees good complexity bounds on \cref{algo:main}.

\texttt{regularTree($k$)} can achieve considerable speed-up by parallelisation. Whereas every \texttt{while}-iteration in \texttt{sequential} depends on the output of the previous iteration, \texttt{regularTree($k$)} allows us to compute all gluings in parallel in $\lceil \log_k n\rceil$ batches. The total number of gluings remains the same.

\texttt{overlap} further reduces the size of the candidate set $T$ compared to \texttt{sequential}, while exploiting parallel computation like \texttt{regularTree($k$)}. It glues projections two at a time, but only those $A_1$ and $A_2$ which overlap significantly. This can lead to much smaller candidate sets $T$, at best $|T|=d$ which makes a unimodular transformation obsolete. The strategy \texttt{overlap} seems particularly successful in practice and is the one used for the timings in Section~\ref{sec:timings}.

Our implementation in \textsc{Singular} also allows for custom gluing strategies by means of specifying a graph as in \cref{fig:strategies}.

\section{Timings}\label{sec:timings}

In this section we present timings of our \textsc{Singular} implementation of Algorithm~\ref{algo:main} for $K=\QQ$ and the $2$-adic valuation. We compare it to a \textsc{Magma} \cite{magma} implementation which approximates the roots in the $2$-adic norm. While \textsc{Singular} is also capable of the same task, we chose to compare to \textsc{Magma} instead as it is significantly faster due to its finite precision arithmetic over $p$-adic numbers. Our \textsc{Singular} timings use the \texttt{overlap} strategy, a modular approach and parallelisation with up to four threads. The \textsc{Singular} times we report on are total CPU times across all threads (for reference, the longest example in \textsc{Singular} required 118 seconds total CPU time, but only 32 seconds real time). All computations were run on a server with 2 Intel Xeon Gold 6144 CPUs, 384GB RAM and Debian GNU/Linux 9.9 OS. All examples and scripts are available at \url{https://software.mis.mpg.de}.

\medskip

\subsection{Random lexicographical Gr\"obner bases in shape position}~\label{sec:timingsLexGB}
Given natural numbers $d$ and $n$, a random lexicographical Gr\"obner basis $G$ of an ideal $I\subseteq \QQ[x_1,\dots,x_n]$ of degree $d$ in shape position will be a Gr\"obner basis of the form
\[ G = \{f_n,\, x_{n-1}-f_{n-1},\, x_{n-2}-f_{n-2},\, \ldots,\, x_2-f_2,\, x_1-f_1\}, \]
where $f_n, f_{n-1},f_{n-2} \ldots, f_1$ are univariate polynomials in $x_n$ of degree $d,d-1,d-1,\ldots,d-1$ respectively with coefficients of the form $2^\lambda \cdot (2k+1)$ for a random $\lambda\in\{0,\ldots,99\}$ and a random $k\in\{0,\dots,4999\}$.

Figure \ref{fig:triangularTimings} shows timings for $n=5$ and varying $d$. Each computation was aborted if it failed to terminate within one hour. We see that \textsc{Magma} is significantly faster for small examples, while \textsc{Singular} scales better with increasing degree.

For many of the ideals $I$ however, $\Trop(I)$ has fewer than $d$ distinct points. This puts our algorithm at an advantage, as it allows for easier projections in Algorithm~\ref{algo:main} Line~\ref{algstep:pickU}. Mathematically, it is not an easy task to generate non-trivial examples with distinct tropical points. Picking $f_n$ to have $d$ roots with distinct valuation for example would make all roots live in $\QQ_2$, in which case \textsc{Magma} terminates instantly. Our next special family of examples has criteria which guarantee distinct points.

\newcommand{\tda}{2}
\newcommand{\tdb}{4}
\newcommand{\tdc}{8}
\newcommand{\tdd}{12}
\newcommand{\tde}{16}
\newcommand{\tdf}{20}
\newcommand{\tdg}{24}
\newcommand{\tdh}{2}
\newcommand{\tdi}{2}
\newcommand{\tdj}{2}
\newcommand{\tdk}{2}
\newcommand{\tdl}{2}
\newcommand{\tfa}{100}
\newcommand{\tfb}{100}
\newcommand{\tfc}{100}
\newcommand{\tfd}{100}
\newcommand{\tfe}{100}
\newcommand{\tff}{100}
\newcommand{\tfg}{100}
\newcommand{\tfh}{100}
\newcommand{\tfi}{100}
\newcommand{\tfj}{100}
\newcommand{\tfk}{100}
\newcommand{\tfl}{100}
\newcommand{\tmva}{-1.312}
\newcommand{\tmvb}{-0.213}
\newcommand{\tmvc}{1.610}
\newcommand{\tmvd}{2.822}
\newcommand{\tmve}{3.367}
\newcommand{\tmvf}{3.491}
\newcommand{\tmvg}{3.531}
\newcommand{\tmvh}{2.875}
\newcommand{\tmvi}{2.875}
\newcommand{\tmvj}{2.875}
\newcommand{\tmvk}{2.875}
\newcommand{\tmvl}{2.875}
\newcommand{\tmaa}{-0.959}
\newcommand{\tmab}{1.143}
\newcommand{\tmac}{2.534}
\newcommand{\tmad}{3.556}
\newcommand{\tmae}{3.556}
\newcommand{\tmaf}{3.556}
\newcommand{\tmag}{3.556}
\newcommand{\tmah}{3.164}
\newcommand{\tmai}{3.164}
\newcommand{\tmaj}{3.164}
\newcommand{\tmak}{3.164}
\newcommand{\tmal}{3.164}
\newcommand{\tmia}{-2.000}
\newcommand{\tmib}{-1.523}
\newcommand{\tmic}{-1.097}
\newcommand{\tmid}{0.648}
\newcommand{\tmie}{1.273}
\newcommand{\tmif}{1.937}
\newcommand{\tmig}{0.897}
\newcommand{\tmih}{2.485}
\newcommand{\tmii}{2.485}
\newcommand{\tmij}{2.485}
\newcommand{\tmik}{2.485}
\newcommand{\tmil}{2.485}
\newcommand{\tsva}{0.158}
\newcommand{\tsvb}{0.656}
\newcommand{\tsvc}{1.143}
\newcommand{\tsvd}{1.127}
\newcommand{\tsve}{1.564}
\newcommand{\tsvf}{1.639}
\newcommand{\tsvg}{1.797}
\newcommand{\tsvh}{3.039}
\newcommand{\tsvi}{3.039}
\newcommand{\tsvj}{3.039}
\newcommand{\tsvk}{3.039}
\newcommand{\tsvl}{3.039}
\newcommand{\tsaa}{0.477}
\newcommand{\tsab}{0.954}
\newcommand{\tsac}{1.415}
\newcommand{\tsad}{1.568}
\newcommand{\tsae}{1.778}
\newcommand{\tsaf}{1.833}
\newcommand{\tsag}{2.072}
\newcommand{\tsah}{3.329}
\newcommand{\tsai}{3.329}
\newcommand{\tsaj}{3.329}
\newcommand{\tsak}{3.329}
\newcommand{\tsal}{3.329}
\newcommand{\tsia}{-2.000}
\newcommand{\tsib}{0.301}
\newcommand{\tsic}{0.845}
\newcommand{\tsid}{0.903}
\newcommand{\tsie}{1.431}
\newcommand{\tsif}{1.230}
\newcommand{\tsig}{1.447}
\newcommand{\tsih}{2.415}
\newcommand{\tsii}{2.415}
\newcommand{\tsij}{2.415}
\newcommand{\tsik}{2.415}
\newcommand{\tsil}{2.415}

\begin{figure}[h!]
  \centering
  \begin{tikzpicture}[x=0.5cm,y=1cm,every node/.style={font=\footnotesize}]
    \draw[->] (0,-2.25) -- (27,-2.25) node[anchor=south east] {$\deg(I)$};
    \draw[->] (0,-2.25) -- (0,4) node[anchor=north west] {time (s)};
    \node[red,anchor=west] at (3.5,3.65) {---\textsc{Magma}};
    \node[blue,anchor=west] at (3.5,3.15) {---\textsc{Singular}};
    \coordinate (xva) at (\tda,-2.25);
    \coordinate (xvb) at (\tdb,-2.25);
    \coordinate (xvc) at (\tdc,-2.25);
    \coordinate (xvd) at (\tdd,-2.25);
    \coordinate (xve) at (\tde,-2.25);
    \coordinate (xvf) at (\tdf,-2.25);
    \coordinate (xvg) at (\tdg,-2.25);
    \coordinate (xvh) at (\tdh,-2.25);
    \coordinate (xvi) at (\tdi,-2.25);
    \coordinate (xvj) at (\tdj,-2.25);
    \coordinate (xvk) at (\tdk,-2.25);

    \coordinate (yva) at (0,-2);
    \coordinate (yvb) at (0,-1.5);
    \coordinate (yvc) at (0,-1);
    \coordinate (yvd) at (0,-0.5);
    \coordinate (yve) at (0,0);
    \coordinate (yvf) at (0,0.5);
    \coordinate (yvg) at (0,1);
    \coordinate (yvh) at (0,1.5);
    \coordinate (yvi) at (0,2);
    \coordinate (yvj) at (0,2.5);
    \coordinate (yvk) at (0,3);
    \coordinate (yvl) at (0,3.5);

    \node[font=\tiny] at (xva) {$|$};
    \node[font=\tiny] at (xvb) {$|$};
    \node[font=\tiny] at (xvc) {$|$};
    \node[font=\tiny] at (xvd) {$|$};
    \node[font=\tiny] at (xve) {$|$};
    \node[font=\tiny] at (xvf) {$|$};
    \node[font=\tiny] at (xvg) {$|$};
    \node[font=\scriptsize,anchor=north,yshift=-1mm] at (xva) {2};
    \node[font=\scriptsize,anchor=north,yshift=-1mm] at (xvb) {4};
    \node[font=\scriptsize,anchor=north,yshift=-1mm] at (xvc) {8};
    \node[font=\scriptsize,anchor=north,yshift=-1mm] at (xvd) {12};
    \node[font=\scriptsize,anchor=north,yshift=-1mm] at (xve) {16};
    \node[font=\scriptsize,anchor=north,yshift=-1mm] at (xvf) {20};
    \node[font=\scriptsize,anchor=north,yshift=-1mm] at (xvg) {24};

    \node[xshift=0.175mm] at (yva) {-};
    \node[xshift=0.175mm] at (yvb) {-};
    \node[xshift=0.175mm] at (yvc) {-};
    \node[xshift=0.175mm] at (yvd) {-};
    \node[xshift=0.175mm] at (yve) {-};
    \node[xshift=0.175mm] at (yvf) {-};
    \node[xshift=0.175mm] at (yvg) {-};
    \node[xshift=0.175mm] at (yvh) {-};
    \node[xshift=0.175mm] at (yvi) {-};
    \node[xshift=0.175mm] at (yvj) {-};
    \node[xshift=0.175mm] at (yvk) {-};
    \node[xshift=0.175mm] at (yvl) {-};
    \node[anchor=east,font=\scriptsize] at (yva) {0.01};
    \node[anchor=east,font=\scriptsize] at (yvc) {0.1};
    \node[anchor=east,font=\scriptsize] at (yve) {1};
    \node[anchor=east,font=\scriptsize] at (yvg) {10};
    \node[anchor=east,font=\scriptsize] at (yvi) {100};
    \node[anchor=east,font=\scriptsize] at (yvk) {1000};

    \coordinate[xshift=-0.75mm] (mva) at (\tda,\tmva);
    \coordinate[xshift=-0.75mm] (mvb) at (\tdb,\tmvb);
    \coordinate[xshift=-0.75mm] (mvc) at (\tdc,\tmvc);
    \coordinate[xshift=-0.75mm] (mvd) at (\tdd,\tmvd);
    \coordinate[xshift=-0.75mm] (mve) at (\tde,\tmve);
    \coordinate[xshift=-0.75mm] (mvf) at (\tdf,\tmvf);
    \coordinate[xshift=-0.75mm] (mvg) at (\tdg,\tmvg);
    \coordinate[xshift=-0.75mm] (mvh) at (\tdh,\tmvh);
    \coordinate[xshift=-0.75mm] (mvi) at (\tdi,\tmvi);
    \coordinate[xshift=-0.75mm] (mvj) at (\tdj,\tmvj);
    \coordinate[xshift=-0.75mm] (mvk) at (\tdk,\tmvk);
    \coordinate[xshift=-0.75mm] (maa) at (\tda,\tmaa);
    \coordinate[xshift=-0.75mm] (mab) at (\tdb,\tmab);
    \coordinate[xshift=-0.75mm] (mac) at (\tdc,\tmac);
    \coordinate[xshift=-0.75mm] (mad) at (\tdd,\tmad);
    \coordinate[xshift=-0.75mm] (mae) at (\tde,\tmae);
    \coordinate[xshift=-0.75mm] (maf) at (\tdf,\tmaf);
    \coordinate[xshift=-0.75mm] (mag) at (\tdg,\tmag);
    \coordinate[xshift=-0.75mm] (mah) at (\tdh,\tmah);
    \coordinate[xshift=-0.75mm] (mai) at (\tdi,\tmai);
    \coordinate[xshift=-0.75mm] (maj) at (\tdj,\tmaj);
    \coordinate[xshift=-0.75mm] (mak) at (\tdk,\tmak);
    \coordinate[xshift=-0.75mm] (mia) at (\tda,\tmia);
    \coordinate[xshift=-0.75mm] (mib) at (\tdb,\tmib);
    \coordinate[xshift=-0.75mm] (mic) at (\tdc,\tmic);
    \coordinate[xshift=-0.75mm] (mid) at (\tdd,\tmid);
    \coordinate[xshift=-0.75mm] (mie) at (\tde,\tmie);
    \coordinate[xshift=-0.75mm] (mif) at (\tdf,\tmif);
    \coordinate[xshift=-0.75mm] (mig) at (\tdg,\tmig);
    \coordinate[xshift=-0.75mm] (mih) at (\tdh,\tmih);
    \coordinate[xshift=-0.75mm] (mii) at (\tdi,\tmii);
    \coordinate[xshift=-0.75mm] (mij) at (\tdj,\tmij);
    \coordinate[xshift=-0.75mm] (mik) at (\tdk,\tmik);

    \coordinate[xshift=0.75mm] (sva) at (\tda,\tsva);
    \coordinate[xshift=0.75mm] (svb) at (\tdb,\tsvb);
    \coordinate[xshift=0.75mm] (svc) at (\tdc,\tsvc);
    \coordinate[xshift=0.75mm] (svd) at (\tdd,\tsvd);
    \coordinate[xshift=0.75mm] (sve) at (\tde,\tsve);
    \coordinate[xshift=0.75mm] (svf) at (\tdf,\tsvf);
    \coordinate[xshift=0.75mm] (svg) at (\tdg,\tsvg);
    \coordinate[xshift=0.75mm] (svh) at (\tdh,\tsvh);
    \coordinate[xshift=0.75mm] (svi) at (\tdi,\tsvi);
    \coordinate[xshift=0.75mm] (svj) at (\tdj,\tsvj);
    \coordinate[xshift=0.75mm] (svk) at (\tdk,\tsvk);
    \coordinate[xshift=0.75mm] (saa) at (\tda,\tsaa);
    \coordinate[xshift=0.75mm] (sab) at (\tdb,\tsab);
    \coordinate[xshift=0.75mm] (sac) at (\tdc,\tsac);
    \coordinate[xshift=0.75mm] (sad) at (\tdd,\tsad);
    \coordinate[xshift=0.75mm] (sae) at (\tde,\tsae);
    \coordinate[xshift=0.75mm] (saf) at (\tdf,\tsaf);
    \coordinate[xshift=0.75mm] (sag) at (\tdg,\tsag);
    \coordinate[xshift=0.75mm] (sah) at (\tdh,\tsah);
    \coordinate[xshift=0.75mm] (sai) at (\tdi,\tsai);
    \coordinate[xshift=0.75mm] (saj) at (\tdj,\tsaj);
    \coordinate[xshift=0.75mm] (sak) at (\tdk,\tsak);
    \coordinate[xshift=0.75mm] (sia) at (\tda,\tsia);
    \coordinate[xshift=0.75mm] (sib) at (\tdb,\tsib);
    \coordinate[xshift=0.75mm] (sic) at (\tdc,\tsic);
    \coordinate[xshift=0.75mm] (sid) at (\tdd,\tsid);
    \coordinate[xshift=0.75mm] (sie) at (\tde,\tsie);
    \coordinate[xshift=0.75mm] (sif) at (\tdf,\tsif);
    \coordinate[xshift=0.75mm] (sig) at (\tdg,\tsig);
    \coordinate[xshift=0.75mm] (sih) at (\tdh,\tsih);
    \coordinate[xshift=0.75mm] (sii) at (\tdi,\tsii);
    \coordinate[xshift=0.75mm] (sij) at (\tdj,\tsij);
    \coordinate[xshift=0.75mm] (sik) at (\tdk,\tsik);

    \node[xshift=0.175mm,red] at (maa) {-};
    \node[xshift=0.175mm,red] at (mab) {-};
    \node[xshift=0.175mm,red] at (mac) {-};
    \node[xshift=0.175mm,red] at (mia) {-};
    \node[xshift=0.175mm,red] at (mib) {-};
    \node[xshift=0.175mm,red] at (mic) {-};
    \node[xshift=0.175mm,red] at (mid) {-};
    \node[xshift=0.175mm,red] at (mie) {-};
    \node[xshift=0.175mm,red] at (mif) {-};
    \node[xshift=0.175mm,red] at (mig) {-};
    \draw[red] (maa) -- (mia);
    \draw[red] (mab) -- (mib);
    \draw[red] (mac) -- (mic);
    \draw[red] (mad) -- (mid);
    \draw[red] (mae) -- (mie);
    \draw[red] (maf) -- (mif);
    \draw[red] (mag) -- (mig);
    \draw[red,densely dashed] (mad) -- ++(0,0.5);
    \draw[red,densely dashed] (mae) -- ++(0,0.5);
    \draw[red,densely dashed] (maf) -- ++(0,0.5);
    \draw[red,densely dashed] (mag) -- ++(0,0.5);

    \node[xshift=0.175mm,blue] at (saa) {-};
    \node[xshift=0.175mm,blue] at (sab) {-};
    \node[xshift=0.175mm,blue] at (sac) {-};
    \node[xshift=0.175mm,blue] at (sad) {-};
    \node[xshift=0.175mm,blue] at (sae) {-};
    \node[xshift=0.175mm,blue] at (saf) {-};
    \node[xshift=0.175mm,blue] at (sag) {-};
    \node[xshift=0.175mm,blue] at (sia) {-};
    \node[xshift=0.175mm,blue] at (sib) {-};
    \node[xshift=0.175mm,blue] at (sic) {-};
    \node[xshift=0.175mm,blue] at (sid) {-};
    \node[xshift=0.175mm,blue] at (sie) {-};
    \node[xshift=0.175mm,blue] at (sif) {-};
    \node[xshift=0.175mm,blue] at (sig) {-};
    \draw[blue] (saa) -- (sia);
    \draw[blue] (sab) -- (sib);
    \draw[blue] (sac) -- (sic);
    \draw[blue] (sad) -- (sid);
    \draw[blue] (sae) -- (sie);
    \draw[blue] (saf) -- (sif);
    \draw[blue] (sag) -- (sig);

    \fill[red] (mva) circle (1.5pt);
    \fill[red] (mvb) circle (1.5pt);
    \fill[red] (mvc) circle (1.5pt);
    \fill[white,draw=red] (mvd) circle (1.5pt);
    \fill[white,draw=red] (mve) circle (1.5pt);
    \fill[white,draw=red] (mvf) circle (1.5pt);
    \fill[white,draw=red] (mvg) circle (1.5pt);

    \fill[blue] (sva) circle (1.5pt);
    \fill[blue] (svb) circle (1.5pt);
    \fill[blue] (svc) circle (1.5pt);
    \fill[blue] (svd) circle (1.5pt);
    \fill[blue] (sve) circle (1.5pt);
    \fill[blue] (svf) circle (1.5pt);
    \fill[blue] (svg) circle (1.5pt);
  \end{tikzpicture}\vspace{2mm}
  \begin{small}
   \begin{tabular}{|l|c|c|c|c|c|c|c|c|c|c|c|c|c|}
    \hline
    $\deg(I)$ & 2 & 4 & 8 & 12 & 16 & 20 & 24 \\\hline
    \#\textsc{Singular} finished & 100 & 100 & 100 & 100 & 100 & 100 & 100 \\\hline
    \#\textsc{Magma} finished & 100 & 100 & 100 & 93 & 51 & 21 & 9 \\\hline
    \textsc{Singular} avg. (s) & 1 & 5 & 14 & 19 & 37 & 44 & 63 \\\hline
    \textsc{Magma} avg. (s) & 0 & 1 & 41 & $>$663 & $>$2273 & $>$3095 & $>$3395  \\\hline
  \end{tabular}
  \end{small}\vspace{-2mm}
  \caption{Timings for the randomly generated ideals in shape position.}
\label{fig:triangularTimings}
\end{figure}
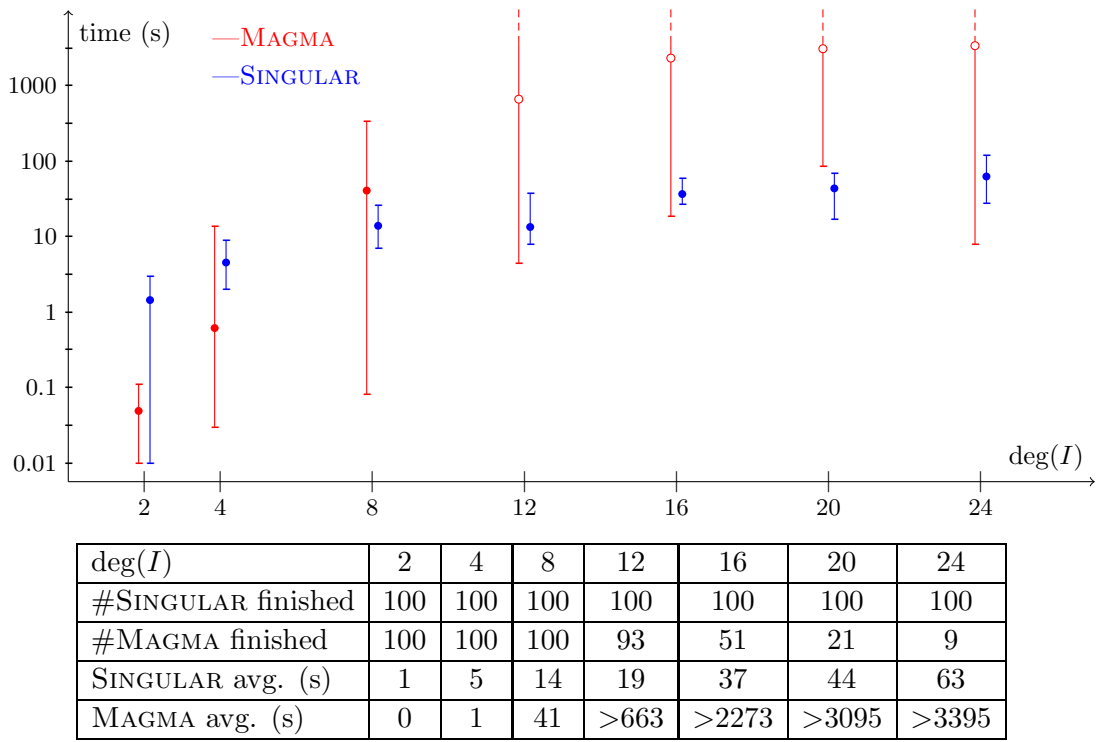

\subsection{Tropical lines on a random honeycomb cubic}~\label{sec:timingsTropicalLines}
Let $V(f)\subseteq\PP^3$ be a smooth cubic surface. In \cite{PV19}, it is shown that $\Trop(f)\subseteq\RR^3$ may contain infinitely many tropical lines. However, for general $f$ whose coefficient valuations induce a honeycomb subdivision of its Newton polytope, $\Trop(f)$ will always contain exactly $27$ distinct tropical lines \cite[Theorem 27]{PV19}, which must therefore be the tropicalizations of the $27$ lines on $V(f)$.

We used \textsc{Polymake} \cite{polymake} to randomly generate $1000$ cubic polynomials with honeycomb subdivisions whose coefficients are pure powers of $2$. For each cubic polynomial $f$, we constructed the one-dimensional homogeneous ideal $\mathcal L_f\subseteq \QQ[p_{12},p_{13},p_{14},p_{23},p_{24},p_{34}]$ of degree~$27$ whose solutions are the lines on $V(f)$ in Pl\"ucker coordinates. Figure~\ref{fig:linesOnCubicsTimings} shows the timings for computing $\Trop(L_f)$, where $L_f:= \mathcal L_f + (p_{34}-1)$ is a zero-dimensional ideal of degree~$27$. Out of our $1000$ random cubics, $8$ had to be discarded because $L_f$ was of lower degree, i.e., $V(f)$ contained lines with $p_{34}=0$.

Unsurprisingly, the \textsc{Singular} timings are relatively stable, while the \textsc{Magma} timings heavily depend on the degree of the splitting field of $L_f$ over $\QQ_2$. Over $\QQ$, the generic splitting field degree would be $51840$ \cite{EJ12}. Over $\QQ_2$, the distinct tropical points of $\Trop(L_f)$ severely restrict the Galois group of the splitting field.

\renewcommand{\tda}{0.301}
\renewcommand{\tdb}{0.477}
\renewcommand{\tdc}{0.602}
\renewcommand{\tdd}{0.778}
\renewcommand{\tde}{0.903}
\renewcommand{\tdf}{1.079}
\renewcommand{\tdg}{1.204}
\renewcommand{\tdh}{1.38}
\renewcommand{\tdi}{1.681}
\renewcommand{\tdj}{1.806}
\renewcommand{\tdk}{1.903}
\renewcommand{\tdl}{1.982}
\renewcommand{\tfa}{304}
\renewcommand{\tfb}{26}
\renewcommand{\tfc}{279}
\renewcommand{\tfd}{88}
\renewcommand{\tfe}{145}
\renewcommand{\tff}{35}
\renewcommand{\tfg}{19}
\renewcommand{\tfh}{74}
\renewcommand{\tfi}{14}
\renewcommand{\tfj}{2}
\renewcommand{\tfk}{4}
\renewcommand{\tfl}{1}
\renewcommand{\tmva}{1.364}
\renewcommand{\tmvb}{1.344}
\renewcommand{\tmvc}{1.567}
\renewcommand{\tmvd}{2.017}
\renewcommand{\tmve}{2.173}
\renewcommand{\tmvf}{2.605}
\renewcommand{\tmvg}{2.92}
\renewcommand{\tmvh}{2.919}
\renewcommand{\tmvi}{3.453}
\renewcommand{\tmvj}{3.68}
\renewcommand{\tmvk}{3.301}
\renewcommand{\tmvl}{3.773}
\renewcommand{\tmaa}{1.923}
\renewcommand{\tmab}{1.783}
\renewcommand{\tmac}{2.012}
\renewcommand{\tmad}{2.309}
\renewcommand{\tmae}{2.608}
\renewcommand{\tmaf}{2.938}
\renewcommand{\tmag}{3.132}
\renewcommand{\tmah}{3.311}
\renewcommand{\tmai}{3.642}
\renewcommand{\tmaj}{3.683}
\renewcommand{\tmak}{3.32}
\renewcommand{\tmal}{3.773}
\renewcommand{\tmia}{0.671}
\renewcommand{\tmib}{0.881}
\renewcommand{\tmic}{0.89}
\renewcommand{\tmid}{0.547}
\renewcommand{\tmie}{1.438}
\renewcommand{\tmif}{2.201}
\renewcommand{\tmig}{2.651}
\renewcommand{\tmih}{2.554}
\renewcommand{\tmii}{3.226}
\renewcommand{\tmij}{3.678}
\renewcommand{\tmik}{3.289}
\renewcommand{\tmil}{3.773}

\renewcommand{\tsva}{2.75}
\renewcommand{\tsvb}{2.45}
\renewcommand{\tsvc}{2.7}
\renewcommand{\tsvd}{2.79}
\renewcommand{\tsve}{2.81}
\renewcommand{\tsvf}{2.69}
\renewcommand{\tsvg}{2.5}
\renewcommand{\tsvh}{2.76}
\renewcommand{\tsvi}{2.64}
\renewcommand{\tsvj}{2.47}
\renewcommand{\tsvk}{2.42}
\renewcommand{\tsvl}{2.55}
\renewcommand{\tsaa}{3.22}
\renewcommand{\tsab}{2.67}
\renewcommand{\tsac}{3.37}
\renewcommand{\tsad}{3.4}
\renewcommand{\tsae}{3.21}
\renewcommand{\tsaf}{3.26}
\renewcommand{\tsag}{2.78}
\renewcommand{\tsah}{3.38}
\renewcommand{\tsai}{2.97}
\renewcommand{\tsaj}{2.51}
\renewcommand{\tsak}{2.54}
\renewcommand{\tsal}{2.55}
\renewcommand{\tsia}{2.24}
\renewcommand{\tsib}{2.25}
\renewcommand{\tsic}{2.16}
\renewcommand{\tsid}{2.26}
\renewcommand{\tsie}{2.32}
\renewcommand{\tsif}{2.29}
\renewcommand{\tsig}{2.33}
\renewcommand{\tsih}{2.29}
\renewcommand{\tsii}{2.48}
\renewcommand{\tsij}{2.42}
\renewcommand{\tsik}{2.3}
\renewcommand{\tsil}{2.55}

\begin{figure}[h!]
  \centering
  \begin{tikzpicture}[x=6cm,y=1.25cm,every node/.style={font=\footnotesize}]
    \draw[->] (0,0) -- (2.15,0) node[anchor=south east] {splitting field degree};
    \draw[->] (0,0) -- (0,4.5) node[anchor=north west] {time (s)};

    \node[red,anchor=west] at (0.25,4.25) {---\textsc{Magma}};
    \node[blue,anchor=west] at (0.25,3.85) {---\textsc{Singular}};

    \coordinate (xva) at (\tda,0);
    \coordinate (xvb) at (\tdb,0);
    \coordinate (xvc) at (\tdc,0);
    \coordinate (xvd) at (\tdd,0);
    \coordinate (xve) at (\tde,0);
    \coordinate (xvf) at (\tdf,0);
    \coordinate (xvg) at (\tdg,0);
    \coordinate (xvh) at (\tdh,0);
    \coordinate (xvi) at (\tdi,0);
    \coordinate (xvj) at (\tdj,0);
    \coordinate (xvk) at (\tdk,0);
    \coordinate (xvl) at (\tdl,0);

    \coordinate (yva) at (0,0.5);
    \coordinate (yvb) at (0,1);
    \coordinate (yvc) at (0,1.5);
    \coordinate (yvd) at (0,2);
    \coordinate (yve) at (0,2.5);
    \coordinate (yvf) at (0,3);
    \coordinate (yvg) at (0,3.5);
    \coordinate (yvh) at (0,4);

    \node[font=\tiny] at (xva) {$|$};
    \node[font=\tiny] at (xvb) {$|$};
    \node[font=\tiny] at (xvc) {$|$};
    \node[font=\tiny] at (xvd) {$|$};
    \node[font=\tiny] at (xve) {$|$};
    \node[font=\tiny] at (xvf) {$|$};
    \node[font=\tiny] at (xvg) {$|$};
    \node[font=\tiny] at (xvh) {$|$};
    \node[font=\tiny] at (xvi) {$|$};
    \node[font=\tiny] at (xvj) {$|$};
    \node[font=\tiny] at (xvk) {$|$};
    \node[font=\tiny] at (xvl) {$|$};
    \node[font=\scriptsize,anchor=north,yshift=-1mm] at (xva) {2};
    \node[font=\scriptsize,anchor=north,yshift=-1mm] at (xvb) {3};
    \node[font=\scriptsize,anchor=north,yshift=-1mm] at (xvc) {4};
    \node[font=\scriptsize,anchor=north,yshift=-1mm] at (xvd) {6};
    \node[font=\scriptsize,anchor=north,yshift=-1mm] at (xve) {8};
    \node[font=\scriptsize,anchor=north,yshift=-1mm] at (xvf) {12};
    \node[font=\scriptsize,anchor=north,yshift=-1mm] at (xvg) {16};
    \node[font=\scriptsize,anchor=north,yshift=-1mm] at (xvh) {24};
    \node[font=\scriptsize,anchor=north,yshift=-1mm] at (xvi) {48};
    \node[font=\scriptsize,anchor=north,yshift=-1mm] at (xvj) {64};
    \node[font=\scriptsize,anchor=north,yshift=-1mm] at (xvk) {80};
    \node[font=\scriptsize,anchor=north,yshift=-1mm] at (xvl) {96};
    \node[font=\tiny,anchor=north,yshift=-5mm] at (xva) {(304)};
    \node[font=\tiny,anchor=north,yshift=-5mm] at (xvb) {(26)};
    \node[font=\tiny,anchor=north,yshift=-5mm] at (xvc) {(279)};
    \node[font=\tiny,anchor=north,yshift=-5mm] at (xvd) {(88)};
    \node[font=\tiny,anchor=north,yshift=-5mm] at (xve) {(145)};
    \node[font=\tiny,anchor=north,yshift=-5mm] at (xvf) {(35)};
    \node[font=\tiny,anchor=north,yshift=-5mm] at (xvg) {(19)};
    \node[font=\tiny,anchor=north,yshift=-5mm] at (xvh) {(74)};
    \node[font=\tiny,anchor=north,yshift=-5mm] at (xvi) {(14)};
    \node[font=\tiny,anchor=north,yshift=-5mm] at (xvj) {(2)};
    \node[font=\tiny,anchor=north,yshift=-5mm] at (xvk) {(4)};
    \node[font=\tiny,anchor=north,yshift=-5mm] at (xvl) {(1)};

    \node[xshift=0.175mm] at (yva) {-};
    \node[xshift=0.175mm] at (yvb) {-};
    \node[xshift=0.175mm] at (yvc) {-};
    \node[xshift=0.175mm] at (yvd) {-};
    \node[xshift=0.175mm] at (yve) {-};
    \node[xshift=0.175mm] at (yvf) {-};
    \node[xshift=0.175mm] at (yvg) {-};
    \node[xshift=0.175mm] at (yvh) {-};
    \node[anchor=east,font=\scriptsize] at (yva) {};
    \node[anchor=east,font=\scriptsize] at (yvb) {10};
    \node[anchor=east,font=\scriptsize] at (yvc) {};
    \node[anchor=east,font=\scriptsize] at (yvd) {100};
    \node[anchor=east,font=\scriptsize] at (yve) {};
    \node[anchor=east,font=\scriptsize] at (yvf) {1000};
    \node[anchor=east,font=\scriptsize] at (yvg) {};
    \node[anchor=east,font=\scriptsize] at (yvh) {10000};

    \coordinate[xshift=-0.75mm] (mva) at (\tda,\tmva);
    \coordinate[xshift=-0.75mm] (mvb) at (\tdb,\tmvb);
    \coordinate[xshift=-0.75mm] (mvc) at (\tdc,\tmvc);
    \coordinate[xshift=-0.75mm] (mvd) at (\tdd,\tmvd);
    \coordinate[xshift=-0.75mm] (mve) at (\tde,\tmve);
    \coordinate[xshift=-0.75mm] (mvf) at (\tdf,\tmvf);
    \coordinate[xshift=-0.75mm] (mvg) at (\tdg,\tmvg);
    \coordinate[xshift=-0.75mm] (mvh) at (\tdh,\tmvh);
    \coordinate[xshift=-0.75mm] (mvi) at (\tdi,\tmvi);
    \coordinate[xshift=-0.75mm] (mvj) at (\tdj,\tmvj);
    \coordinate[xshift=-0.75mm] (mvk) at (\tdk,\tmvk);
    \coordinate[xshift=-0.75mm] (mvl) at (\tdl,\tmvl);
    \coordinate[xshift=-0.75mm] (maa) at (\tda,\tmaa);
    \coordinate[xshift=-0.75mm] (mab) at (\tdb,\tmab);
    \coordinate[xshift=-0.75mm] (mac) at (\tdc,\tmac);
    \coordinate[xshift=-0.75mm] (mad) at (\tdd,\tmad);
    \coordinate[xshift=-0.75mm] (mae) at (\tde,\tmae);
    \coordinate[xshift=-0.75mm] (maf) at (\tdf,\tmaf);
    \coordinate[xshift=-0.75mm] (mag) at (\tdg,\tmag);
    \coordinate[xshift=-0.75mm] (mah) at (\tdh,\tmah);
    \coordinate[xshift=-0.75mm] (mai) at (\tdi,\tmai);
    \coordinate[xshift=-0.75mm] (maj) at (\tdj,\tmaj);
    \coordinate[xshift=-0.75mm] (mak) at (\tdk,\tmak);
    \coordinate[xshift=-0.75mm] (mal) at (\tdl,\tmal);
    \coordinate[xshift=-0.75mm] (mia) at (\tda,\tmia);
    \coordinate[xshift=-0.75mm] (mib) at (\tdb,\tmib);
    \coordinate[xshift=-0.75mm] (mic) at (\tdc,\tmic);
    \coordinate[xshift=-0.75mm] (mid) at (\tdd,\tmid);
    \coordinate[xshift=-0.75mm] (mie) at (\tde,\tmie);
    \coordinate[xshift=-0.75mm] (mif) at (\tdf,\tmif);
    \coordinate[xshift=-0.75mm] (mig) at (\tdg,\tmig);
    \coordinate[xshift=-0.75mm] (mih) at (\tdh,\tmih);
    \coordinate[xshift=-0.75mm] (mii) at (\tdi,\tmii);
    \coordinate[xshift=-0.75mm] (mij) at (\tdj,\tmij);
    \coordinate[xshift=-0.75mm] (mik) at (\tdk,\tmik);
    \coordinate[xshift=-0.75mm] (mil) at (\tdl,\tmil);

    \coordinate[xshift=0.75mm] (sva) at (\tda,\tsva);
    \coordinate[xshift=0.75mm] (svb) at (\tdb,\tsvb);
    \coordinate[xshift=0.75mm] (svc) at (\tdc,\tsvc);
    \coordinate[xshift=0.75mm] (svd) at (\tdd,\tsvd);
    \coordinate[xshift=0.75mm] (sve) at (\tde,\tsve);
    \coordinate[xshift=0.75mm] (svf) at (\tdf,\tsvf);
    \coordinate[xshift=0.75mm] (svg) at (\tdg,\tsvg);
    \coordinate[xshift=0.75mm] (svh) at (\tdh,\tsvh);
    \coordinate[xshift=0.75mm] (svi) at (\tdi,\tsvi);
    \coordinate[xshift=0.75mm] (svj) at (\tdj,\tsvj);
    \coordinate[xshift=0.75mm] (svk) at (\tdk,\tsvk);
    \coordinate[xshift=0.75mm] (svl) at (\tdl,\tsvl);
    \coordinate[xshift=0.75mm] (saa) at (\tda,\tsaa);
    \coordinate[xshift=0.75mm] (sab) at (\tdb,\tsab);
    \coordinate[xshift=0.75mm] (sac) at (\tdc,\tsac);
    \coordinate[xshift=0.75mm] (sad) at (\tdd,\tsad);
    \coordinate[xshift=0.75mm] (sae) at (\tde,\tsae);
    \coordinate[xshift=0.75mm] (saf) at (\tdf,\tsaf);
    \coordinate[xshift=0.75mm] (sag) at (\tdg,\tsag);
    \coordinate[xshift=0.75mm] (sah) at (\tdh,\tsah);
    \coordinate[xshift=0.75mm] (sai) at (\tdi,\tsai);
    \coordinate[xshift=0.75mm] (saj) at (\tdj,\tsaj);
    \coordinate[xshift=0.75mm] (sak) at (\tdk,\tsak);
    \coordinate[xshift=0.75mm] (sal) at (\tdl,\tsal);
    \coordinate[xshift=0.75mm] (sia) at (\tda,\tsia);
    \coordinate[xshift=0.75mm] (sib) at (\tdb,\tsib);
    \coordinate[xshift=0.75mm] (sic) at (\tdc,\tsic);
    \coordinate[xshift=0.75mm] (sid) at (\tdd,\tsid);
    \coordinate[xshift=0.75mm] (sie) at (\tde,\tsie);
    \coordinate[xshift=0.75mm] (sif) at (\tdf,\tsif);
    \coordinate[xshift=0.75mm] (sig) at (\tdg,\tsig);
    \coordinate[xshift=0.75mm] (sih) at (\tdh,\tsih);
    \coordinate[xshift=0.75mm] (sii) at (\tdi,\tsii);
    \coordinate[xshift=0.75mm] (sij) at (\tdj,\tsij);
    \coordinate[xshift=0.75mm] (sik) at (\tdk,\tsik);
    \coordinate[xshift=0.75mm] (sil) at (\tdl,\tsil);

    \node[xshift=0.175mm,red] at (maa) {-};
    \node[xshift=0.175mm,red] at (mab) {-};
    \node[xshift=0.175mm,red] at (mac) {-};
    \node[xshift=0.175mm,red] at (mad) {-};
    \node[xshift=0.175mm,red] at (mae) {-};
    \node[xshift=0.175mm,red] at (maf) {-};
    \node[xshift=0.175mm,red] at (mag) {-};
    \node[xshift=0.175mm,red] at (mah) {-};
    \node[xshift=0.175mm,red] at (mai) {-};
    \node[xshift=0.175mm,red] at (maj) {-};
    \node[xshift=0.175mm,red] at (mak) {-};
    \node[xshift=0.175mm,red] at (mal) {-};
    \node[xshift=0.175mm,red] at (mia) {-};
    \node[xshift=0.175mm,red] at (mib) {-};
    \node[xshift=0.175mm,red] at (mic) {-};
    \node[xshift=0.175mm,red] at (mid) {-};
    \node[xshift=0.175mm,red] at (mie) {-};
    \node[xshift=0.175mm,red] at (mif) {-};
    \node[xshift=0.175mm,red] at (mig) {-};
    \node[xshift=0.175mm,red] at (mih) {-};
    \node[xshift=0.175mm,red] at (mii) {-};
    \node[xshift=0.175mm,red] at (mij) {-};
    \node[xshift=0.175mm,red] at (mik) {-};
    \node[xshift=0.175mm,red] at (mil) {-};
    \draw[red] (maa) -- (mia);
    \draw[red] (mab) -- (mib);
    \draw[red] (mac) -- (mic);
    \draw[red] (mad) -- (mid);
    \draw[red] (mae) -- (mie);
    \draw[red] (maf) -- (mif);
    \draw[red] (mag) -- (mig);
    \draw[red] (mah) -- (mih);
    \draw[red] (mai) -- (mii);
    \draw[red] (maj) -- (mij);
    \draw[red] (mak) -- (mik);
    \draw[red] (mal) -- (mil);

    \node[xshift=0.175mm,blue] at (saa) {-};
    \node[xshift=0.175mm,blue] at (sab) {-};
    \node[xshift=0.175mm,blue] at (sac) {-};
    \node[xshift=0.175mm,blue] at (sad) {-};
    \node[xshift=0.175mm,blue] at (sae) {-};
    \node[xshift=0.175mm,blue] at (saf) {-};
    \node[xshift=0.175mm,blue] at (sag) {-};
    \node[xshift=0.175mm,blue] at (sah) {-};
    \node[xshift=0.175mm,blue] at (sai) {-};
    \node[xshift=0.175mm,blue] at (saj) {-};
    \node[xshift=0.175mm,blue] at (sak) {-};
    \node[xshift=0.175mm,blue] at (sal) {-};
    \node[xshift=0.175mm,blue] at (sia) {-};
    \node[xshift=0.175mm,blue] at (sib) {-};
    \node[xshift=0.175mm,blue] at (sic) {-};
    \node[xshift=0.175mm,blue] at (sid) {-};
    \node[xshift=0.175mm,blue] at (sie) {-};
    \node[xshift=0.175mm,blue] at (sif) {-};
    \node[xshift=0.175mm,blue] at (sig) {-};
    \node[xshift=0.175mm,blue] at (sih) {-};
    \node[xshift=0.175mm,blue] at (sii) {-};
    \node[xshift=0.175mm,blue] at (sij) {-};
    \node[xshift=0.175mm,blue] at (sik) {-};
    \node[xshift=0.175mm,blue] at (sil) {-};
    \draw[blue] (saa) -- (sia);
    \draw[blue] (sab) -- (sib);
    \draw[blue] (sac) -- (sic);
    \draw[blue] (sad) -- (sid);
    \draw[blue] (sae) -- (sie);
    \draw[blue] (saf) -- (sif);
    \draw[blue] (sag) -- (sig);
    \draw[blue] (sah) -- (sih);
    \draw[blue] (sai) -- (sii);
    \draw[blue] (saj) -- (sij);
    \draw[blue] (sak) -- (sik);
    \draw[blue] (sal) -- (sil);

    \fill[red] (mva) circle (1.5pt);
    \fill[red] (mvb) circle (1.5pt);
    \fill[red] (mvc) circle (1.5pt);
    \fill[red] (mvd) circle (1.5pt);
    \fill[red] (mve) circle (1.5pt);
    \fill[red] (mvf) circle (1.5pt);
    \fill[red] (mvg) circle (1.5pt);
    \fill[red] (mvh) circle (1.5pt);
    \fill[red] (mvi) circle (1.5pt);
    \fill[red] (mvj) circle (1.5pt);
    \fill[red] (mvk) circle (1.5pt);
    \fill[red] (mvl) circle (1.5pt);

    \fill[blue] (sva) circle (1.5pt);
    \fill[blue] (svb) circle (1.5pt);
    \fill[blue] (svc) circle (1.5pt);
    \fill[blue] (svd) circle (1.5pt);
    \fill[blue] (sve) circle (1.5pt);
    \fill[blue] (svf) circle (1.5pt);
    \fill[blue] (svg) circle (1.5pt);
    \fill[blue] (svh) circle (1.5pt);
    \fill[blue] (svi) circle (1.5pt);
    \fill[blue] (svj) circle (1.5pt);
    \fill[blue] (svk) circle (1.5pt);
    \fill[blue] (svl) circle (1.5pt);
  \end{tikzpicture}\vspace{2mm}
  \begin{small}
    \begin{tabular}{|l|c|c|c|c|c|c|c|c|c|c|c|c|}
      \hline
      splitting deg. & 2 & 3 & 4 & 6 & 8 & 12 & 16 & 24 & 48 & 64 & 80 & 96\\\hline
      Frequency & 304 & 26 & 279 & 88 & 145 & 35 & 19 & 74 & 14 & 2 & 4 & 1\\\hline
      \textsc{Singular} avg. & 556 & 281 & 505 & 610 & 651 & 490 & 313 & 580 & 440 & 294 & 261 & 352\\\hline
      \textsc{Magma} avg. & 23 & 22 & 37 & 104 & 149 & 403 & 831 & 830 & 2840 & 4791 & 1998 & 5935\\\hline
    \end{tabular}\vspace{-2mm}
  \end{small}
  \caption{Timings for the 27 tropical lines on a tropical honeycomb cubic.}
  \label{fig:linesOnCubicsTimings}
\end{figure}

\section{Complexity}\label{sec:complexity}

In this section, we bound the complexity for computing a zero-dimensional tropical variety from a given Gröbner basis using Algorithm~\ref{algo:main} with the \texttt{sequential} strategy. We show that the number of required arithmetic operations is polynomial in the degree of the ideal and the ambient dimension.
Based on this, we argue that the complexity of computing a higher-dimensional tropical variety is dominated by the Gr\"obner walk required to traverse the Gr\"obner complex, as the computation of a tropical link is essentially polynomial time in the aforementioned sense.

\begin{convention}
  For the remainder of the section, consider a zero-dimensional ideal $I\subseteq K[x_1^{\pm},\dots,x_n^{\pm}]$ of degree $d$ and assume $\nu(K^*)\subseteq\QQ$, so that $\Trop(I) \subseteq \QQ^n$.
\end{convention}

For the sake of convenience, we recall some results on the complexity of arithmetic operations over algebraic extensions, a well-studied topic in the area of computational algebra.

\begin{proposition}[{\cite[Corollary~4.6 + Section~4.3 +  Exercise~12.10]{GG13}}]\label{re:complexity}
  Let $f,g\in K[z]$ be two univariate polynomials of degree $\leq d$. Then:
  \begin{enumerate}[leftmargin=*]
  \item Addition, multiplication and inversion in $K[z]/(f)$ require $\mathcal O(d^2)$ arithmetic operations in $K$.
  \item Computing the $k$-th power of $\bar{g} \in K[z]/(f)$ requires $\mathcal O(d^2 \log k)$ arithmetic operations in $K$.
  \item Computing the minimal polynomial of $\bar{g} \in K[z]/(f)$ requires $\mathcal O(d^2\: \log d \: \log \log d)$ arithmetic operations in $K$.
  \end{enumerate}
\end{proposition}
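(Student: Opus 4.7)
Since the statement consists of three classical bounds from computer algebra, the plan is to invoke standard techniques rather than reinvent them, keeping the degree estimates transparent so a reader can check the exponent on $d$.

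For part (1), I would proceed by treating each operation in $K[z]/(f)$ in two stages: perform the operation on representatives in $K[z]$, then reduce modulo $f$. Addition costs $\mathcal O(d)$ scalar operations. For multiplication, the schoolbook algorithm on polynomials of degree $<d$ produces a product of degree $<2d$ in $\mathcal O(d^2)$ operations, and the subsequent reduction by $f$ amounts to a polynomial division, which again requires $\mathcal O(d^2)$ operations. For inversion of $\bar g\in K[z]/(f)$ with $\gcd(g,f)=1$, I would invoke the extended Euclidean algorithm applied to $f$ and $g$ in $K[z]$: the resulting Bezout identity $uf+vg=1$ yields $\bar g^{-1}=\bar v$, and the classical analysis gives an $\mathcal O(d^2)$ bound on the number of field operations.

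For part (2), the obvious plan is repeated squaring in the ring $R:=K[z]/(f)$. Writing $k$ in binary, one computes $\bar g^{2^i}$ for $i=0,\dots,\lfloor \log_2 k\rfloor$ and multiplies together those corresponding to bits equal to $1$. This uses $\mathcal O(\log k)$ multiplications in $R$, and applying the $\mathcal O(d^2)$ bound from part (1) yields the stated $\mathcal O(d^2\log k)$ count.

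For part (3), the approach is based on linear algebra inside the $d$-dimensional $K$-vector space $R$. The naive plan is to compute the sequence $1,\bar g,\bar g^2,\dots,\bar g^d$ by $d$ successive multiplications, arrange the results as rows of a $(d+1)\times d$ matrix, and locate the first linear dependence via Gaussian elimination; this would give $\mathcal O(d^3)$. The main obstacle is to bring this down to the quasi-quadratic bound claimed: the trick is to avoid dense linear algebra by recognising the problem as a linearly recurrent sequence computation in $R$, so that a Berlekamp--Massey-type algorithm applied to the sequence $(\mathrm{tr}(\bar g^i))_i$ — or, equivalently, the Wiedemann-type reduction to univariate polynomial extended gcd of degree-$d$ polynomials — yields the minimal polynomial in $\mathcal O(d\,M(d))$ operations. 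Invoking fast multiplication with $M(d)=\mathcal O(d\log d\log\log d)$ then produces the advertised $\mathcal O(d^2\log d\log\log d)$ bound. As all three statements are classical, I would mainly quote the standard references rather than re-derive the fast polynomial-arithmetic primitives.
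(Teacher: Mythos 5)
The paper cites this proposition directly from von zur Gathen and Gerhard (\cite{GG13}) and supplies no proof of its own, so there is no argument in the paper to compare against. Your reconstruction follows the standard route in that reference and is correct: schoolbook polynomial arithmetic plus division with remainder and the extended Euclidean algorithm for (1), binary exponentiation for (2), and for (3) a power-projection reduction to a linearly recurrent scalar sequence solved by Berlekamp--Massey (equivalently, fast half-gcd), with fast multiplication $M(d)=\mathcal O(d\log d\log\log d)$ making the $\mathcal O(d)$ successive multiplications in $K[z]/(f)$ the dominant cost, giving $\mathcal O(d\,M(d))=\mathcal O(d^2\log d\log\log d)$. The one point you pass over a little quickly in (3) is that the sequence $(\mathrm{tr}(\bar g^i))_i$ determines the full minimal polynomial of $\bar g$ only when the trace form on $K[z]/(f)$ is nondegenerate, i.e.\ when $f$ is squarefree and separable; in general one should either first replace $f$ by its squarefree part (a fast-gcd preprocessing step costing $\mathcal O(M(d)\log d)$, which is subdominant) or use a random linear functional in the Wiedemann style and verify, and also note that the scalar-sequence computation requires the power iterates $\bar g^0,\dots,\bar g^{2d-1}$, not merely up to $\bar g^d$, so that Berlekamp--Massey has enough terms to recover a degree-$d$ recurrence. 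These are small details; the overall argument is sound and matches the cited source.
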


\begin{proposition}\label{prop:complexityTransform}
  Algorithm~\ref{algo:groebnerTransformation}, which computes the lexicographical Gröbner basis of $\varphi_u(I)$ for some slim transformation $\varphi_u$, requires $\mathcal O\big(d^2 \sum_{u_i > 0} (1+\log u_i)\big)$ arithmetic operations in $K$.
\end{proposition}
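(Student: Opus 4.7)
The plan is to decompose \cref{algo:groebnerTransformation} into its elementary arithmetic operations in the univariate quotient ring $K[x_n]/(f_n)$, which has $K$-dimension equal to $d = \deg(f_n)$, and then apply the bounds from \cref{re:complexity} term by term. The single nontrivial line of the algorithm is the computation of
\[ f'_\ell \;\equiv\; \bigl(\overline{x_n}^{u_n} \cdot \textstyle\prod_{i \neq \ell, n} \overline{f_i}^{u_i}\bigr)^{-1} \cdot \overline{f_\ell} \pmod{f_n},\]
and I would reduce the whole cost to that of (i) computing the individual powers, (ii) multiplying them into a single product, and (iii) one inversion followed by one final multiplication.

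First, I would compute each power $\overline{f_i}^{u_i}$ (for $i \in \{1,\dots,n-1\} \setminus \{\ell\}$ with $u_i > 0$) and $\overline{x_n}^{u_n}$ (if $u_n > 0$) in $K[x_n]/(f_n)$ by repeated squaring. By \cref{re:complexity}~(2), each such power costs $\mathcal O(d^2 \log u_i)$ arithmetic operations in $K$, so that summing over the indices with $u_i > 0$ contributes
\[ \mathcal O\!\Bigl(d^2\!\!\sum_{u_i > 0} \log u_i\Bigr)\]
to the total cost. Note that factors with $u_i = 0$ are trivial and can be skipped, while the index $\ell$ carries $u_\ell = -1$ and does not appear in the product at all.

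Second, I would multiply the at most $\#\{i : u_i > 0\}$ precomputed factors together one by one, invert the resulting element of $K[x_n]/(f_n)$, and multiply by $\overline{f_\ell}$. Each of these steps is a single multiplication or inversion in $K[x_n]/(f_n)$, and by \cref{re:complexity}~(1) each costs $\mathcal O(d^2)$ arithmetic operations in $K$. Hence this second stage contributes $\mathcal O(d^2 \sum_{u_i > 0} 1)$, absorbing the two extra constant-size operations into the sum. Combining the two stages yields the claimed bound $\mathcal O\bigl(d^2 \sum_{u_i > 0}(1 + \log u_i)\bigr)$.

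The only point requiring any care is the applicability of the inversion step: \cref{re:complexity}~(1) implicitly assumes that the element being inverted is actually a unit in $K[x_n]/(f_n)$. This is not an obstacle, however, because the correctness proof of \cref{algo:groebnerTransformation} already establishes that $f_n$ is coprime to $x_n$ and to each $f_i$ for $i < n$, so $\overline{x_n}^{u_n} \prod_{i \neq \ell,n} \overline{f_i}^{u_i}$ is a unit in $K[x_n]/(f_n)$ and the inversion is well defined.
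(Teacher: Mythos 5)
Your proof is correct and follows essentially the same approach as the paper: decompose the computation of $f_\ell'$ into exponentiations, multiplications, and inversions in $K[x_n]/(f_n)$, then apply \cref{re:complexity} to each. The only cosmetic difference is that the paper first rewrites $f_\ell' \equiv (x_n^{u_n} \cdot f_\ell^{-1} \cdot \prod_{i\neq\ell,n} f_i^{u_i})^{-1}$ to mirror the actual implementation (done for coefficient-growth reasons explained in Section~5, trading one multiplication for an extra inversion), whereas you count the operations directly from the formula in \cref{algo:groebnerTransformation}; both give the same asymptotic bound, and your well-definedness remark about the inversion is exactly what the paper's correctness proof supplies.
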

\begin{proof}
  We need to count the number of field operations in which the following polynomial $f_\ell'\in K[x_n]$ can be computed:
  \begin{align*}
    f_\ell' \equiv \Big(x_n^{u_n}\cdot\prod_{\substack{i=1\\ i\neq \ell}}^{n-1} f_i^{u_i}\Big)^{-1} \cdot f_\ell \equiv \Big(x_n^{u_n} \cdot f_\ell^{-1} \cdot \prod_{\substack{i=1\\ i\neq \ell}}^{n-1} f_i^{u_i} \Big)^{-1} \pmod{f_n}.
  \end{align*}
  Denoting $k := |\{i \in \{1,\ldots,n\} \mid u_i \neq 0\}|$, this entails the following in the ring $K[x_n]/(f_n)$:
  \begin{itemize}[leftmargin=*]
  \item $k-1$ exponentiations $x_n^{u_n}$ and $f_i^{u_i}$ for $i \neq \ell,n$.
  \item $1$ inversion for $f_\ell$,
  \item $k-1$ multiplications for the product of $f_{\ell}^{-1}$, $x_n^{u_n}$ and all other $f_i^{u_i}$,
  \item $1$ final inversion.
  \end{itemize}
  An exponentiation to the power $u_i$ requires $\mathcal O(d^2 \log u_i)$ arithmetic operations in $K$, while every other operation requires $\mathcal O(d^2)$ arithmetic operations in $K$ by Proposition~\ref{re:complexity}. In total, the number of required field operations in $K$ is
  \[\mathcal O\big(d^2 \sum_{u_i > 0}(1+\log u_i) + d^2+d^2(k-1)+d^2\big) = \mathcal O\big(d^2 \sum_{u_i > 0} (1+\log u_i)\big).\qedhere\]
\end{proof}

\begin{lemma}\label{lem:boundsProj}
  Let $X,Y \subseteq \QQ$ be finite sets of cardinality $\leq d$. Then there exists a non-negative integer $m \leq \binom{d^2}{2}$ such that $X \times Y \to \QQ$, $(a,b) \mapsto a-mb$ is injective. The smallest such $m$ can be found in $\mathcal O(d^4)$ arithmetic operations in $\QQ$.
\end{lemma}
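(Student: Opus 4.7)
The plan is to characterise when an integer $m$ fails to make the map injective, count how many such failures can occur, and then apply pigeonhole together with a direct enumeration algorithm.

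First, call an integer $m$ \emph{bad} if there exist distinct pairs $(a_1,b_1) \neq (a_2,b_2)$ in $X \times Y$ satisfying $a_1 - m b_1 = a_2 - m b_2$, i.e., $m(b_1 - b_2) = a_1 - a_2$. I would observe that for a fixed unordered pair of distinct elements of $X \times Y$, there are two cases: if $b_1 = b_2$, then distinctness forces $a_1 \neq a_2$, so no $m$ can satisfy the equation; if $b_1 \neq b_2$, then the equation has the unique solution $m = (a_1-a_2)/(b_1-b_2) \in \QQ$. Thus each unordered pair of distinct elements of $X \times Y$ contributes at most one bad value to $\QQ$, and since $|X \times Y| \leq d^2$, the total number of bad values in $\QQ$ (in particular, the number of bad non-negative integers) is at most $\binom{d^2}{2}$.

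Since the set $\{0,1,\dots,\binom{d^2}{2}\}$ contains $\binom{d^2}{2}+1$ non-negative integers, a pigeonhole argument shows that at least one of them is not bad, proving the existence of the required $m \leq \binom{d^2}{2}$.

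For the complexity bound, the algorithm is: iterate over all $\binom{|X\times Y|}{2} \leq \binom{d^2}{2} = \mathcal O(d^4)$ unordered pairs of distinct elements of $X \times Y$; for each pair with $b_1 \neq b_2$, compute $(a_1-a_2)/(b_1-b_2)$ in $\mathcal O(1)$ arithmetic operations in $\QQ$; if the result is a non-negative integer lying in $\{0,\ldots,\binom{d^2}{2}\}$, mark it in a Boolean array of length $\binom{d^2}{2}+1$. Finally, scan this array for the smallest unmarked index. Computing the candidate values costs $\mathcal O(d^4)$ arithmetic operations in $\QQ$, marking and scanning the array costs $\mathcal O(d^4)$ bookkeeping steps (each involving at most a constant number of comparisons), for an overall cost of $\mathcal O(d^4)$ arithmetic operations in $\QQ$. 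The only mildly delicate point is to observe that we never need to consider integers beyond $\binom{d^2}{2}$, so both the enumeration of candidates and the auxiliary array stay within the claimed size bound; no other step requires care.
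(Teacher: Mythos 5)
Your proposal is correct and follows essentially the same approach as the paper: for each pair of distinct points in $X \times Y$ you identify the (at most one) value of $m$ that equates their images, count these at most $\binom{d^2}{2}$ "bad" slopes, and pigeonhole over $\{0,\ldots,\binom{d^2}{2}\}$; the algorithm then enumerates all pair slopes and picks the smallest omitted non-negative integer. You spell out the case analysis ($b_1 = b_2$ versus $b_1 \neq b_2$) and the bookkeeping more explicitly than the paper's terse geometric phrasing, but the content is identical.
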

\begin{proof}
  The map $(a,b)\mapsto a-mb$ will fail to be injective if and only if there exists a pair of points in $X\times Y$ lying on an affine line with slope $m$. Since there are at most $\binom{d^2}{2}$ pairs of points, the statement follows by the pigeonhole principle.

  We can determine all integral slopes attained by a line between any two points of $X \times Y$ with $\mathcal O(\binom{d^2}{2}) = \mathcal O(d^4)$ arithmetic operations in $\QQ$. Picking the smallest natural number not occurring among these slopes gives the desired $m$.
\end{proof}

\begin{proposition}\label{prop:complexityGlue}
  Let $k \in \{2,\ldots,n\}$ and assume that the following is known from a previous call of Algorithm~\ref{algo:glue} within Algorithm~\ref{algo:main} running the \texttt{sequential} strategy:
  \begin{itemize}[leftmargin=*]
    \item $p_{\{1,\dots,k-1\}}(\Trop(I))$ and $p_{\{k\}}(\Trop(I))$,
    \item a slim transformation $\varphi_v$ concentrated at $\ell$ with $v_i = 0$ for $i \geq k$ such that $\pi_v$ is injective on $p_{\{1,\dots,k-1\}}(\Trop(I))$,
    \item the lexicographical Gröbner basis of $\varphi_v(I)$.
  \end{itemize}
  Then Algorithm~\ref{algo:glue} for gluing the two projections into $p_{\{1,\dots,k\}}(\Trop(I))$ requires $\mathcal O(d^2 \log d \log \log d)$ and $\mathcal O(d^4)$ arithmetic operations in $K$ and $\QQ$ respectively.
\end{proposition}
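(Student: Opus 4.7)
The plan is to trace \cref{algo:glue} with $A_1=\{1,\dots,k-1\}$ and $A_2=\{k\}$, using the inductively supplied data to both restrict the search space for $u$ and to bypass recomputing a Gröbner basis from scratch. The candidate set is $T = p_{\{1,\dots,k-1\}}(\Trop(I))\times p_{\{k\}}(\Trop(I))$, so $|T|\leq d^2$, and the main task is to realise lines~\ref{algstep:pickU}--\ref{algstep:computeMinPoly} without paying more than $\mathcal O(d^2 \log d \log\log d)$ field operations and $\mathcal O(d^4)$ rational operations.

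First I would choose $u \in \mathcal U$ of the specific form $u = v + m e_k$ for a non-negative integer $m$. Since $v_\ell=-1$, $v_i\geq 0$ for $i\neq \ell$, and $v_i=0$ for $i\geq k$, such a $u$ again lies in $\mathcal U$, concentrated at the same index $\ell<k$. For $w\in T$ one computes $\pi_u(w) = \pi_v\bigl(p_{\{1,\dots,k-1\}}(w)\bigr) - m\, w_k$, and since $\pi_v$ is injective on $p_{\{1,\dots,k-1\}}(\Trop(I))$, injectivity of $\pi_u|_T$ reduces to injectivity of the map $(a,b)\mapsto a-mb$ on the product of the two finite sets $\pi_v\bigl(p_{\{1,\dots,k-1\}}(\Trop(I))\bigr)$ and $p_{\{k\}}(\Trop(I))$ in $\QQ$, each of size at most $d$. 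By \cref{lem:boundsProj}, the smallest admissible $m$ satisfies $m\leq\binom{d^2}{2}$ and can be found in $\mathcal O(d^4)$ arithmetic operations in $\QQ$, after the $\mathcal O(d)$ evaluations of $\pi_v$ used to assemble the input.

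The next step is the key observation: setting $\tilde w := -e_\ell + m e_k \in\mathcal U$, one checks directly on generators that $\varphi_u = \varphi_{\tilde w}\circ\varphi_v$. Therefore \cref{algo:groebnerTransformation} applied to the supplied Gröbner basis of $\varphi_v(I)$ and to $\varphi_{\tilde w}$ returns the Gröbner basis of $\varphi_u(I)$. The only positive entry of $\tilde w$ is $\tilde w_k=m=\mathcal O(d^4)$, so \cref{prop:complexityTransform} bounds this step by $\mathcal O\bigl(d^2(1+\log m)\bigr)=\mathcal O(d^2\log d)$ operations in $K$. Computing the minimal polynomial of $\overline{f_\ell'}\in K[x_n]/(f_n)$ then costs $\mathcal O(d^2\log d\log\log d)$ field operations by \cref{re:complexity}(3), which dominates. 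Reading off $\Trop(\mu)$ from the Newton polygon and filtering $T$ by $\pi_u(w)\in\Trop(\mu)$ is elementary and uses $\mathcal O(d^2)$ rational operations, finishing the count.

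The main obstacle is not any individual cost estimate but the compositional reuse encoded in $\varphi_u=\varphi_{\tilde w}\circ\varphi_v$: without it, one would either be forced to apply \cref{algo:groebnerTransformation} to a transformation whose positive entries $v_i$ are a priori unbounded in $d$, breaking the logarithmic factor, or to choose $u$ from the much larger search space $\mathcal U\cap\RR^{\{1,\dots,k\}}$, which is not covered by \cref{lem:boundsProj}. Restricting the update to the single new coordinate $m e_k$ is what makes both the separation argument and the Gröbner basis transform polynomial in the correct sense.
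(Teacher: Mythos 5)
Your proof is correct and follows essentially the same argument as the paper's: you reduce the injectivity check to the one-dimensional map $(a,b)\mapsto a-mb$ on $\pi_v(p_{\{1,\dots,k-1\}}(\Trop(I)))\times p_{\{k\}}(\Trop(I))$, bound $m$ via \cref{lem:boundsProj}, and exploit the factorisation $\varphi_{v+me_k}=\varphi_{me_k-e_\ell}\circ\varphi_v$ to update the Gröbner basis with a single slim transformation whose only positive entry is $m=\mathcal O(d^4)$. The one small extra you include — the $\mathcal O(d)$ evaluations of $\pi_v$ to form $X$ — is unnecessary since $\pi_v(p_{\{1,\dots,k-1\}}(\Trop(I)))$ is precisely the set $\Trop(\mu)$ already produced in the previous iteration of \cref{algo:glue}, but this does not affect correctness.
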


\begin{proof}
  Applying Lemma~\ref{lem:boundsProj} to $X:=\pi_v(p_{\{1,\dots,k-1\}}(\Trop(I)))$ and $Y:=p_{\{k\}}(\Trop(I))$, we can compute a minimal $m \leq \binom{d^2}{2}$ such that $(a,b) \mapsto a-m b$ is injective on $X \times Y$ in $\mathcal O(d^4)$ arithmetic $\QQ$-operations. Setting $w := v + m e_k$, this means that $\pi_w$ is injective on $p_{\{1,\dots,k-1\}}(\Trop(I))\times p_{\{k\}}(\Trop(I))$.

  Since $\varphi_w(I) = \varphi_u(\varphi_v(I))$ for $u := m e_k-e_\ell$ and a lexicographical Gr\"obner basis of $\varphi_v(I)$ is already known, we may compute the lexicographical Gr\"obner basis of $\varphi_w(I)$ by applying Algorithm~\ref{algo:groebnerTransformation} to $u$ and $\varphi_v(I)$. By \cref{prop:complexityTransform}, this requires $\mathcal O(d^2 \log m) =\mathcal O(d^2 \log d)$ arithmetic operations in $K$.

  By Proposition~\ref{re:complexity}, computing the minimal polynomial of $\overbar{f_\ell'}\in K[x_n]/(f_n)$ requires $\mathcal O(d^2 \log d \log \log d)$ arithmetic operations in $K$, so the overall number of arithmetic $K$-operations in Algorithm~\ref{algo:glue} is also $\mathcal O(d^2 \log d \log \log d)$.
\end{proof}

\begin{theorem}\label{thm:complexityMain}
  Algorithm~\ref{algo:main}, which computes the zero-dimensional tropical variety $\Trop(I)$, with the \texttt{sequential} strategy requires $\mathcal O(n\, d^2\, \log d\, \log \log d)$ and $\mathcal O(nd^4)$ arithmetic operations in $K$ and $\QQ$ respectively.
\end{theorem}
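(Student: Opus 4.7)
The plan is to split Algorithm~\ref{algo:main} into two stages: (i) the $n$ coordinate projections computed in lines~\ref{algstep:projectPrincipal1}--\ref{algstep:projectPrincipal2}, and (ii) the $n-1$ gluing steps performed by the \texttt{sequential} strategy in the \texttt{while} loop. I will bound each stage separately using \cref{re:complexity} and \cref{prop:complexityGlue}, then sum the two contributions.

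For stage~(i), the projection $p_{\{n\}}(\Trop(I)) = \Trop(f_n)$ is read off the Newton polygon of the given $f_n$ at no cost. For each $k \in \{1,\ldots,n-1\}$, computing the minimal polynomial $\mu_k$ of $\overbar{f_k} \in K[x_n]/(f_n)$ takes $\mathcal O(d^2 \log d \log \log d)$ arithmetic operations in $K$ by \cref{re:complexity}, after which $p_{\{k\}}(\Trop(I)) = \Trop(\mu_k)$ is again free to extract from the Newton polygon. Summing over $k$ yields $\mathcal O(n d^2 \log d \log \log d)$ operations in $K$ for this stage, with no $\QQ$-operations required.

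For stage~(ii), iteration $k \in \{2,\ldots,n\}$ invokes \cref{algo:glue} to merge $p_{\{1,\ldots,k-1\}}(\Trop(I))$ with $p_{\{k\}}(\Trop(I))$. The main step is to verify that the hypothesis of \cref{prop:complexityGlue} propagates as an invariant between iterations. One initialises with $v := -e_1 \in \mathcal U$, so that $\varphi_v = \id$ and the input Gröbner basis $G$ already serves as the lexicographical Gröbner basis of $\varphi_v(I)$, while $\pi_v$ is trivially injective on $p_{\{1\}}(\Trop(I))$. Inductively, suppose iteration $k$ begins with $v$ concentrated at $\ell = 1$ satisfying $v_i = 0$ for $i \geq k$ together with the Gröbner basis of $\varphi_v(I)$. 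Inspecting the proof of \cref{prop:complexityGlue}, its output is a transformation $w = v + m e_k$ with $m \leq \binom{d^2}{2}$, still concentrated at $\ell = 1$, together with the Gröbner basis of $\varphi_w(I)$; since $w_i = 0$ for $i > k$ and $\pi_w$ is injective on $p_{\{1,\ldots,k-1\}}(\Trop(I)) \times p_{\{k\}}(\Trop(I)) \supseteq p_{\{1,\ldots,k\}}(\Trop(I))$, the data required for iteration $k+1$ is in place. Each iteration therefore costs $\mathcal O(d^2 \log d \log \log d)$ $K$-operations and $\mathcal O(d^4)$ $\QQ$-operations by \cref{prop:complexityGlue}, and summing over the $n-1$ iterations gives the desired bound.

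The subtle point is the reuse of the running transformation from one iteration to the next. If we instead recomputed $\varphi_w$ from scratch at each step, \cref{algo:groebnerTransformation} would have to pay $\mathcal O\big(d^2 \sum_{i \leq k}(1 + \log m_i)\big)$ field operations, causing the $n$-dependence to become quadratic. Reusing $\varphi_v$ reduces the exponent vector fed into \cref{algo:groebnerTransformation} at iteration $k$ to $u = m e_k - e_\ell \in \mathcal U$ with only one positive entry $m = \mathcal O(d^2)$, so $\log m = \mathcal O(\log d)$ and the overall dependence on $n$ remains linear, giving the claimed $\mathcal O(n d^2 \log d \log \log d)$ and $\mathcal O(n d^4)$ bounds.
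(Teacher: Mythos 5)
Your proposal is correct and follows essentially the same approach as the paper's proof. The paper's own argument is terse — it merely observes that the sequential strategy reduces to $n-1$ minimal-polynomial computations plus $n-1$ gluings, and notes in one sentence that the unimodular transformation from iteration $k-1$ should be stored and reused — leaving the inductive bookkeeping implicit. You fill in exactly that bookkeeping: you establish the base case with $v = -e_1$ (so $\varphi_v = \id$ and the input Gröbner basis serves), and you verify that the invariant required by Proposition~\ref{prop:complexityGlue} — a transformation concentrated at a fixed $\ell$ with $v_i = 0$ for $i \geq k$, together with the Gröbner basis of $\varphi_v(I)$ and injectivity of $\pi_v$ on the partial projection — is reproduced at each step as $w = v + m e_k$. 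Your final paragraph explaining why recomputing the transformation from scratch would degrade the $n$-dependence to quadratic is precisely the reason the paper includes its one-sentence remark about storing the transformation, so this is a welcome elaboration rather than a departure.
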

\begin{proof}
  Algorithm~\ref{algo:main} using the \texttt{sequential} strategy consists of
  \begin{itemize}[leftmargin=*]
  \item Computing minimal polynomials of $\overbar{f_k}\in K[x_n]/(f_n)$ for $k=1,\ldots,n-1$
  \item Applying Algorithm~\ref{algo:glue} to $p_{\{1,\ldots,k-1\}}(\Trop(I))$ and $p_{\{k\}}(\Trop(I))$ for $k=2,\ldots,n$.
  \end{itemize}
  We may store the information on the unimodular transformation computed in iteration $k-1$ during the computation of $p_{\{1,\ldots,k-1\}}(\Trop(I))$ and this information may be used in the next iteration. Then Propositions~\ref{re:complexity} and~\ref{prop:complexityGlue} allow us to deduce the claimed bounds on arithmetic operations in \cref{algo:main}.
\end{proof}

\begin{remark}[Computing positive-dimensional tropical varieties]
  Currently, \textsc{gfan} and \textsc{Singular} are the only software systems capable of computing general tropical varieties, and both rely on a guided traversal of the Gr\"obner complex as introduced in \cite{BJSST07}. Their frameworks roughly consist of two parts:
  \begin{enumerate}[leftmargin=*]
  \item the Gr\"obner walk to traverse the tropical variety,
  \item the computation of tropical links to guide the Gr\"obner walk.
  \end{enumerate}
  While the computation of tropical links had been a major bottleneck of the original algorithm and in early implementations, experiments suggest that it has since been resolved by new approaches \cite{Chan13,HR18}. However,  the algorithm in \cite[\S 4.2]{Chan13} relies heavily on projections, while \cite[Algorithm~2.10]{HR18} relies on root approximations to an unknown precision, so neither approach has good complexity bounds. In fact, \cite[Timing 3.9]{HR18} shows that the necessary precision can be exponential in the number of variables.

  Algorithm~\ref{algo:main} was designed with \cite[Algorithm 2.10]{HR18} in mind, and with Theorem~\ref{thm:complexityMain} we argue that the complexity of calculating tropical links as in \cite[Algorithm~4.6]{HR18} is dominated by the complexity of the Gr\"obner basis computations required for the Gr\"obner walk. In the following, let $J\unlhd K[x_1^{\pm},\ldots,x_n^{\pm}]$ be a homogeneous ideal of codimension $c$ and degree $d$.

  \begin{enumerate}[leftmargin=*]
  \item The Gr\"obner walk requires Gr\"obner bases of initial ideals $\text{in}_w(J)$ with respect to weight vectors $w\in\Trop(J)$ with $\dim C_w(J) = \dim\Trop(J) - 1$, where $C_w(J)$ denotes the Gr\"obner polyhedron of $J$ around $w$. Note that $\text{in}_w(J)$ is neither monomial since $w\in\Trop(J)$ nor binomial as $\dim C_w(J)< \dim\Trop(J)$. Therefore, this is a general Gr\"obner basis computation which is commonly regarded as double exponential time.
  \item Replacing \cite[Algorithm~2.10]{HR18} in \cite[Algorithm~4.6]{HR18} with our Algorithm~\ref{algo:main} requires Gr\"obner bases of ideals of the form
    \[ \text{in}_w(J)|_{x_1=\ldots=x_{c-1}=1,x_{c}=\lambda}\unlhd K[x_{c+1},\ldots,x_n], \]
    where $w$ is chosen as before and $\lambda\in K$ is chosen to satisfy $\nu(\lambda)=\pm 1$. These ideals are zero-dimensional of degree at most $d$, and it is known that Gr\"obner bases of zero-dimensional ideals can be on average computed in polynomial time in the number of solutions \cite{Lakshman1991,LL91}. Thus the entire computation of tropical links can on average be done in polynomial time.
  \end{enumerate}
\end{remark}

\renewcommand{\emph}[1]{\textit{\textcolor{red}{#1}}}

\renewcommand{\emph}[1]{\textit{#1}}
\renewcommand*{\bibfont}{\small}
\printbibliography

\end{document}